\newtheorem{theorem}{Theorem}[section]
\newtheorem{proposition}[theorem]{Proposition}
\newtheorem{lemma}[theorem]{Lemma}
\newtheorem{corollary}[theorem]{Corollary}
\crefname{lemma}{lemma}{lemmas}
\crefname{corollary}{corollary}{corollary}
\crefname{proposition}{proposition}{proposition}
\crefname{remark}{remark}{remark}
\theoremstyle{definition}
\newtheorem{definition}[theorem]{Definition}
\theoremstyle{remark}
\newtheorem{remark}[theorem]{Remark}
\renewcommand{\AA}{\mathbb A}
\newcommand{\PP}{\mathbb P}
\newcommand{\RR}{\mathbb R}
\newcommand{\ZZ}{\mathbb Z}
\newcommand{\FF}{\mathbb F}
\newcommand{\RP}{\mathbb{RP}}
\newcommand{\GW}{\operatorname{GW}}
\newcommand{\W}{\operatorname{W}}
\newcommand{\im}{\operatorname{im}}
\newcommand{\rk}{\operatorname{rk}}
\newcommand{\codim}{\operatorname{codim}}
\newcommand{\chara}{\operatorname{char}}
\newcommand{\colim}{\operatorname{colim}}
\begin{document}

\title{Real cycle class isomorphism for linear schemes}
\author{Jan Hennig}

\begin{abstract}
The real cycle class map \[H^i(X,\underline{I}^j(\mathcal{L})) \rightarrow H^i_\text{sing}(X(\RR),\ZZ(\mathcal{L}))\] is an isomorphism for $j\geq \dim(X)+1$ for any scheme $X$ over $\RR$ by a result of Jacobson. It is also known to be an isomorphism for $j\geq i$, the earliest possible case, if $X$ is cellular due to Hornbostel--Wendt--Xie--Zibrowius. This paper generalizes their result to linear schemes, providing (precise) intermediate bounds on the range, where the real cycle class map is an isomorphism. Moreover, we show that Lerbet's conjectured upper bound for the exponent of the cokernel of $H^i(X,\underline{I}^i(\mathcal{L})) \rightarrow H^i_\text{sing}(X(\RR),\ZZ(\mathcal{L}))$ cannot be improved.

This is part of the author's PhD thesis.

\end{abstract}

\maketitle              

\section{Introduction}

Let $X$ be a scheme, separated and finite type over a field $k$ admitting a real embedding. The (twisted) singular cohomology of the real points with the analytic topology $H^i_\text{sing}(X(\RR),\ZZ(\mathcal{L}))$ can be computed completely algebraically by computing the sheaf cohomology $H^i(X,\underline{I}^j(\mathcal{L}))$ for some power of the fundamental ideal sheaf $\underline{I}^j$. Jacobson \cite{Jacobson} constructs a real cycle class map \[H^i(X,\underline{I}^j(\mathcal{L})) \rightarrow H^i_\text{sing}(X(\RR),\ZZ(\mathcal{L}))\] and shows that this is an isomorphism for any scheme $X$ if $j\geq\dim(X)+1$ \cite[Corollary 8.11]{Jacobson} or \cite[Theorem 3.12]{RealCycleClassMap} for the twisted case. In \cite{RealCycleClassMap} it is further shown that the real cycle class map is an isomorphism already for $j\geq i$, if $X$ is a cellular scheme, i.e.\ admits a stratification by affine spaces $\AA^n_k$. The goal of this paper is to improve this bound and understand the non-bijectivity for linear schemes in the sense of Jannsen \cite{JannsenLinear} and Totaro \cite{TotaroLinear}, which include cellular schemes as a \enquote{$0$-linear case}\footnote{They will not be $0$-linear in the sense of the definition but behave like they were. More precisely, they are stratified by $0$-linear schemes and are covered by \Cref{lem: real cycle class map for stratifications by An Gm products}.}.

Jacobson's proof, that the real cycle class map 
\[H^i(X,\underline{I}^{\dim(X)+1}) \rightarrow H^i_\text{sing}(X(\RR),\ZZ)\] is an isomorphism, can be separated into two steps.
First, consider $\colim_j \underline{I}^j$ with transition maps $\langle\!\langle-1\rangle\!\rangle\colon \underline{I}^j\to \underline{I}^{j+1}$ and show $H^i(X,\colim_j\underline{I}^{j}) \cong H^i_\text{sing}(X(\RR),\ZZ)$. Secondly, show that on $X$ this colimit already stabilizes after finitely many steps, i.e.\ $\colim_j \underline{I}^j \cong \underline{I}^{\dim(X)+1}$, by showing that $\langle\!\langle-1\rangle\!\rangle\colon \underline{I}^j\to \underline{I}^{j+1}$ induces an isomorphism for $j\geq \dim(X)+1$.

In section 4 we proceed in the same way by showing that for smooth \enquote{$n$-linear} schemes we get an isomorphism (also for the twisted version), see \Cref{thm: real cycle class map is isomorphism for mult -1 Pister form isomorphism}
\[H^i(X,\underline{I}^{i+n}) \overset{\cong}{\longrightarrow} H^i(X,\underline{I}^{i+n+1}) \overset{\cong}{\longrightarrow}\dots \overset{\cong}{\longrightarrow}H^i(X,\underline{I}^{\dim(X)+1}) \overset{\cong}{\longrightarrow} H^i_\text{sing}(X(\RR),\ZZ).\]
This is the same factorization of the real cycle class maps that was used in \cite{RealCycleClassMap} to show the corresponding result for cellular schemes and $n=0$. The main tool to prove this statement is the inductive description of linear schemes in terms of closed immersions and open complements, together with the localization sequence for $I^j(\mathcal{L})$-cohomology.

We start with section 2 by providing the necessary background on $I^j(\mathcal{L})$-cohomology in terms of the Rost-Schmid complex. In section 3 linear schemes in the sense of Jannsen and Totaro are introduced. Section 4 provides the core study of $H^i(X,\underline{I}^{j}) \to H^i(X,\underline{I}^{j+1})$ for linear schemes. In section 5 we study explicit examples and show that the exponent of the cokernel of the real cycle class map for $\PP^c_\RR\times \mathbb{G}_m^{d-c}$ is exactly the conjectured upper bound in \cite[Conjecture 6.6]{Lerbet}, showing that it cannot be improved.
\subsection*{Acknowledgment}
I would like to thank Matthias Wendt and Marcus Zibrowius for their useful comments and feedback on an earlier draft.

This research was conducted in the framework of the research training group {\em GRK 2240: Algebro-Geometric Methods in Algebra, Arithmetic and Topology}.

\section{Background on \texorpdfstring{$I^j(\mathcal{L})$}{Ij(L)}-cohomology}
Given a field $F$ with $\chara(F)\neq 2$, the Grothendieck--Witt ring $\GW(F)$ is the group completion of the set of isomorphism classes of non-degenerate symmetric bilinear forms over $F$ with respect to orthogonal sums. We write $\langle a\rangle \in \GW(F)$ for the class of the symmetric form $\varphi_a\colon F^2 \to F, (x,y)\mapsto axy$ for $a \in F^\times$ and denote their orthogonal sums by $\langle a,b\rangle = \langle a\rangle +\langle b\rangle$. The Witt ring is defined as $\W(F) := \GW(F)/(\langle1\rangle + \langle -1\rangle)$, i.e.\ the quotient of the Grothendieck--Witt ring by the hyperbolic form. The fundamental ideal is defined as $I(F) := \ker (\rk\colon \W(F)\to \ZZ/2\ZZ)$ and is additively generated by Pfister forms $\langle\!\langle a\rangle\!\rangle := \langle a,-1\rangle$. The fundamental ideal powers form a filtration \[\W(F) \supseteq I(F) \supseteq I^2(F)\supseteq \dots\] and we set $\bar{I}^q:= I^q(F)/I^{q+1}(F)$ with the convention $I^q(F) = \W(F)$ for $q\leq 0$. 

Every non-degenerate symmetric bilinear form over $\RR$ is equivalent to one having only $1$ and $-1$ on the diagonal, so $\GW(\RR)\cong\ZZ[\{+1,-1\}]$, $\W(\RR)=\ZZ \supseteq  2^q\ZZ = I^q(\RR)$ and $\bar{I}^q(\RR) = \ZZ/2\ZZ$ for $q \geq 0$.

For $X$ a separated, finite type $k$-scheme and $\mathcal{L}$ a line bundle on $X$, the Rost-Schmid complex is defined in degree $i$ as \[C_{RS}(X,I^j(\mathcal{L}))_i := \bigoplus_{x\in X_{(i)}} I^{j+i}(\kappa(x), \det(\mathrm\Omega_{\kappa(x)/k})\otimes\mathcal{L}_{\kappa(x)})\]
with the differential defined in terms of residue maps. The sum is indexed by $X_{(i)}$, the points of dimension $i$. The groups appearing in those summands are all of the form \[I^q(F,L):= I^q(F)\otimes_{\ZZ[F^\times]} \ZZ[L\setminus\{0\}]\] for a one-dimensional $F$-vector space $L$. They are non-canonically isomorphic to $I^q(F)$.

Denote the homology of $C_{RS}(X,I^j(\mathcal{L}))$ by $H^{RS}_i(X,I^j(\mathcal{L}))$. This leads to the following identifications
\begin{align*}
    H^{RS}_i(X,I^j(\mathcal{L})) &= H^{BM}_i(X,\underline{I}^{\dim(X)+j}(\mathcal{L}))\\ &= H^{\dim(X)-i}(X,\underline{I}^{\dim(X)+j}(\mathcal{L}\otimes \omega_{X/k})),
\end{align*}
where the middle term is the Borel-Moore homology of the sheaf $\underline{I}^{\dim(X)+q}(\mathcal{L})$ and the latter identification, i.e.\ Poincaré duality, requires $X$ to be smooth. The Rost-Schmid complex here is used as a purely notational device. The main tool used in the paper is the localization sequence \[\dotsc \to H^{RS}_l(X, I^m(\mathcal{L})) \overset{j^\ast}{\longrightarrow} H^{RS}_l(U, I^m(\mathcal{L})) \longrightarrow  H^{RS}_{l-1}(Z, I^m(\mathcal{L}))\to \dotsc\] for a closed immersion $i\colon Z\hookrightarrow X$ with open complement $j\colon U\hookrightarrow X$ and line bundle $\mathcal{L}$ on $X$. In the Rost-Schmid notation the fundamental ideal power does not change and the (co)homological index has no \enquote{$-\operatorname{codim}_X(Z)$} for the group of $Z$.

\section{Linear schemes}

We will deal with two notions of linearity. The first, more general, J-linear notion is due to Jannsen \cite{JannsenLinear} and the more restrictive T-linear notion is due to Totaro \cite{TotaroLinear}.

\begin{definition}
	Let $Y$ be a scheme. Then $Y$ is called:
	\begin{description}[font=\normalfont]
		\item[$0$-J-linear] if $Y=\emptyset$ or $Y\cong\AA^N_k$ for some $N\geq 0$.
		\item[$n$-J-linear] if there exists a triple $(Z,X,U)$ of $k$-schemes with closed immersion $Z\hookrightarrow X$ and open complement $U=X\setminus Z$ such that:
		\begin{description}[font=\normalfont]
			\item[$Y=U$]\quad $Z$ and $X$ are $(n-1)$-J-linear or
			\item[$Y=X$]\quad $Z$ and $U$ are $(n-1)$-J-linear.
		\end{description}
	\end{description}
	A scheme $Y$ is called J-linear, if it is $n$-J-linear for some $n\geq 0$.
\end{definition}

\begin{remark}
    In the definition of J-linearity, non-regular closed immersion are permitted.

    Note that an $n$-J-linear scheme is also $m$-J-linear for any $m\geq n$.
\end{remark}

The first construction option above means that open complements of J-linear schemes in J-linear schemes are again J-linear. The second option is used to show that stratifications by J-linear schemes are J-linear.

We start by providing some constructions of linear schemes.

\begin{definition}
	A stratification of a scheme $X$ is a partition $X=\bigcup_{i\in I} U_i$, with $U_i\subseteq X$ locally closed, for which the following \enquote{boundary condition} holds:
	\begin{align*}
	U_i \cap \overline{U_j}\neq \emptyset\quad\Rightarrow\quad U_i\subseteq \overline{U_j}.
	\end{align*}
	A finite stratification is a stratification with finite index set $I$.
\end{definition}

\begin{remark}
    Two observations will help in the following proof.
    \begin{compactenum}[(i)]
        \item $\overline{U_j}=\bigcup_{U_i\subseteq\overline{U_j}} U_i$
        \item there is at most one stratum $U_j$ with $X=\overline{U_j}$.
    \end{compactenum}
    To see that (ii) holds, assume there are two strata $U$ and $V$ whose closure is $X$. Then $U$ and $V$ are open in $X$, as they are locally closed. So $X\setminus U\subsetneq X$ is a closed set containing $V$. This is a contradiction as $X=\overline{V}\subseteq X\setminus{U}\subsetneq X$. Hence there can only be one.
\end{remark}

\begin{lemma}\label{lem stratification by J-linear is J_linear}
	Let $X$ be a scheme admitting a finite stratification by locally closed J-linear schemes, then $X$ is J-linear.
\end{lemma}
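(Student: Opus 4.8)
The plan is to induct on the number of strata. The base case is a single stratum, which is then all of $X$ and J-linear by hypothesis. For the inductive step, I want to find one stratum that is open in $X$ and whose closed complement again admits a finite stratification by J-linear schemes, then apply the second construction option in the definition of J-linearity (the "$Y=X$" case) to glue.

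First I would locate the right stratum to peel off. Pick a stratum $U_j$ whose closure is maximal among the $\overline{U_i}$ with respect to inclusion; I claim $\overline{U_j}=X$. Indeed, using observation (i) of the preceding remark, $X=\bigcup_i U_i$ and each $U_i$ lies in some $\overline{U_i}$, so by maximality and the boundary condition every stratum meeting $U_j$'s closure business forces $\overline{U_j}$ to absorb everything — more carefully, take any point $x\in X$, it lies in some $U_i$, and since strata are locally closed they are open in their closures, so one shows the generic behaviour propagates; the cleanest route is: among all strata, the union of those $U_i$ that are \emph{open} in $X$ is dense, pick such a $U_i$, and by observation (ii) there is exactly one stratum with closure $X$, which must be this open dense one. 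Set $U := U_j$ for that stratum, so $U$ is open in $X$ with $\overline{U}=X$, and let $Z := X\setminus U$ be the closed complement with reduced structure.

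Next I would verify that $Z$ inherits a finite stratification by J-linear schemes. The strata of $X$ other than $U$ partition $Z$ set-theoretically: each $U_i$ with $i\neq j$ satisfies $U_i\cap U=\emptyset$ (strata are disjoint) hence $U_i\subseteq Z$, and the $U_i$ are still locally closed in $Z$ (intersection of the closed set $Z$ with a locally closed subset of $X$ stays locally closed, and taking closures inside $Z$ versus inside $X$ agree since $Z$ is closed), so the boundary condition is preserved. Thus $Z$ has a finite stratification by the \emph{same} J-linear locally closed schemes, with strictly fewer strata, so by the induction hypothesis $Z$ is J-linear. Say $Z$ is $(n-1)$-J-linear for some $n$. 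Also $U$ is J-linear by hypothesis, hence $(m-1)$-J-linear for some $m$; by the remark we may enlarge both indices to a common value, so $Z$ and $U$ are both $(N-1)$-J-linear for some $N$.

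Finally, apply the definition: the triple $(Z, X, U)$ consists of a closed immersion $Z\hookrightarrow X$ with open complement $U=X\setminus Z$, and both $Z$ and $U$ are $(N-1)$-J-linear, so $X$ is $N$-J-linear, hence J-linear, completing the induction. The main obstacle is the first step — rigorously extracting a stratum that is open and dense in $X$; this is exactly what observations (i) and (ii) of the preceding remark are designed to handle, so I would lean on them rather than re-deriving the topology of stratifications from scratch.
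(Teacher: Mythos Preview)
Your overall strategy---peel off an \emph{open} stratum $U$, observe that the closed complement $Z=X\setminus U$ is stratified by the remaining strata, and conclude via the $Y=X$ clause of the definition applied to the triple $(Z,X,U)$---is sound and does complete the induction. However, your first step contains a false claim: you assert that a stratum $U_j$ with maximal closure satisfies $\overline{U_j}=X$. This fails whenever $X$ is reducible; for instance take $X=\AA^1_k\sqcup\AA^1_k$ stratified by its two components, where no stratum is dense. Observation~(ii) gives only uniqueness, not existence, of a dense stratum. What you actually need (and all you ever use afterwards) is that a maximal $U_j$ is \emph{open} in $X$, and this does follow from the preceding remark: by observation~(i) one has $\bigcup_{i\neq j}\overline{U_i}=\bigcup_{i\neq j}U_i=X\setminus U_j$, because $U_j\subseteq\overline{U_i}$ for some $i\neq j$ would force $\overline{U_i}=\overline{U_j}$ by maximality, contradicting the argument behind observation~(ii). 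Once this is corrected, your proof goes through; the density claim should simply be dropped.

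The paper takes a different decomposition. Instead of locating an open stratum, it picks any $U_j$ with $X\setminus\overline{U_j}\neq\emptyset$---whose existence for $n+1\geq 2$ strata is immediate from observation~(ii)---and writes $X=\overline{U_j}\cup(X\setminus\overline{U_j})$ as closed part plus open complement, \emph{each} stratified by strictly fewer strata. Both pieces are then J-linear by the induction hypothesis. So the paper inducts on both halves and needs only the soft input of observation~(ii), whereas you use one stratum directly from the hypothesis and induct only on its complement, at the cost of the slightly more delicate argument above to extract an open stratum.
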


\begin{proof}
	Write the stratification as $X=\bigcup_{i=1}^n U_i$ with $n=|I|$ and $U_i$ locally closed J-linear. Proceed by induction on the number of strata $n$.
	\begin{description}[font=\normalfont]
		\item[$n=1$] $X=U_1$ is J-linear by assumption.
		\item[$n\leadsto n+1$] In the stratification $X=\bigcup_{i=1}^{n+1} U_i$ pick one $U_j$ with $X\setminus \overline{U_j} \neq\emptyset$, which exists by previous observation and $n+1\geq 2$. Using the other observation write
		\[X = \overline{U_j} \cup (X\setminus \overline{U_j}) = \left(\bigcup_{U_i\subseteq\overline{U_j}} U_i\right) \cup \left(\bigcup_{U_i\nsubseteq\overline{U_j}} U_i\right).\]
		Now both parts are J-linear by induction hypothesis, since they are stratified by at most $n$ strata. Thus $X$ is J-linear as the disjoint union of a closed J-linear scheme and its J-linear open complement.
	\end{description}
	This concludes the proof.
\end{proof}

Instead of picking an arbitrary $U_j$ with $X\setminus \overline{U_j} \neq\emptyset$ in the previous proof, it is possible to pick a closed stratum. For this start with an arbitrary $U_j$ and consider the closure $\overline{U_j}=\bigcup_{U_i\subseteq\overline{U_j}} U_i$. Repeating the process with any $U_i\subseteq\overline{U_j}\setminus U_j$ will terminate after finitely many steps, because in each step there are strictly less strata available and there are only finitely many to begin with. For the last stratum we necessarily have $\overline{U_i}= U_i$.

\begin{lemma}\label{lem nice union of J-linear is J-linear}
	Let $X$ be a scheme and $A_1$, $\dots$, $A_n$ closed irreducible J-linear subschemes of $X$ with $\bigcap_{i\in I} A_i$ irreducible and J-linear for every subset $I\subseteq \{1,\dots,n\}$, then $\bigcup_{i=1}^n A_i$ is J-linear.
\end{lemma}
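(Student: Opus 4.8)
The plan is to induct on $n$, the number of subschemes, peeling off the last subscheme $A_n$ at each step. Write $Y := \bigcup_{i=1}^n A_i$ and decompose it as the disjoint union of the closed subscheme $A_n$ and its open complement $Y\setminus A_n = \bigcup_{i=1}^{n-1}(A_i\setminus A_n)$. The base case $n=1$ is immediate, since $Y=A_1$ is J-linear by hypothesis. For the inductive step I would show that both pieces of this decomposition are J-linear and then invoke the \enquote{$Y=X$} clause of the definition of J-linearity with the triple $(Z,X,U) = (A_n,\,Y,\,Y\setminus A_n)$.

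The content is in the open piece. Note that $Y\setminus A_n$ is the complement in $V := \bigcup_{i=1}^{n-1}A_i$ of the closed subscheme $W := V\cap A_n = \bigcup_{i=1}^{n-1}(A_i\cap A_n)$. The subfamily $A_1,\dots,A_{n-1}$ still satisfies the hypotheses of the lemma, so $V$ is J-linear by the induction hypothesis. The crucial observation is that the family $A_1\cap A_n,\dots,A_{n-1}\cap A_n$ \emph{also} satisfies the hypotheses: each $A_i\cap A_n$ is a closed, irreducible, J-linear subscheme of $X$ (take $I=\{i,n\}$ in the assumption), and for any $I\subseteq\{1,\dots,n-1\}$ one has $\bigcap_{i\in I}(A_i\cap A_n) = \bigcap_{i\in I\cup\{n\}}A_i$, which is irreducible and J-linear by assumption. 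Hence $W$ is J-linear by the induction hypothesis applied to this second family. Since $W$ is closed in the J-linear scheme $V$ and is itself J-linear, its open complement $V\setminus W = Y\setminus A_n$ is J-linear, because open complements of J-linear closed subschemes in J-linear schemes are J-linear (as recorded after the definition). Finally, $A_n$ is closed in $Y$ and J-linear by hypothesis, so $(A_n,Y,Y\setminus A_n)$ is a triple of the required form and $Y$ is J-linear.

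I do not expect a genuine obstacle; the only thing to get right is the bookkeeping that lets a single induction on $n$ suffice — in particular, realizing that the induction hypothesis must be used twice (once for $A_1,\dots,A_{n-1}$, once for $A_1\cap A_n,\dots,A_{n-1}\cap A_n$), and that it is precisely the assumed irreducibility and J-linearity of \emph{all} multi-indexed intersections $\bigcap_{i\in I}A_i$ that makes the second application legitimate. An alternative would be to stratify $Y$ directly by the locally closed pieces $\bigl(\bigcap_{i\in I}A_i\bigr)\setminus\bigl(\bigcup_{j\notin I}A_j\bigr)$ and appeal to \Cref{lem stratification by J-linear is J_linear}, but verifying that each such stratum is J-linear unwinds to essentially the same induction, so the peeling argument is more economical.
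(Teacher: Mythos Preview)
Your argument is correct and takes a genuinely different route from the paper's. The paper stratifies $\bigcup_i A_i$ into the $2^n-1$ \enquote{Venn-diagram} pieces $\bigl(\bigcap_{j\in J}A_j\bigr)\setminus\bigl(\bigcup_{j\notin J}A_j\bigr)$, uses the induction hypothesis to see each stratum is J-linear, verifies the boundary condition (this is where irreducibility enters, to identify $\overline{U_J}=\bigcap_{j\in J}A_j$), and then invokes \Cref{lem stratification by J-linear is J_linear}. Your peeling argument bypasses all of this: you only need the two clauses in the definition of J-linearity and the induction hypothesis applied twice, once to $A_1,\dots,A_{n-1}$ and once to $A_1\cap A_n,\dots,A_{n-1}\cap A_n$. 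This is shorter and avoids the boundary-condition check; in fact, in your approach irreducibility is only used formally, to feed the hypotheses back into the induction, so your argument actually goes through verbatim under the weaker hypothesis that all finite intersections are J-linear. The paper's approach, on the other hand, makes the combinatorics of the stratification explicit, which may be useful if one wants to track the J-linearity level.
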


\begin{proof}
	Proceed by induction on $n$. The idea is to find an appropriate stratification of the union $\bigcup_{i=1}^n A_i$ by splitting it into $2^n-1$ parts, according to which of the $A_i$ an element belongs to.
	\begin{description}
		\item[\rm{$n=1$}] $A_1$ is J-linear by assumption. 
		\item[\rm{$n\leadsto n+1$}] Write
		\[\bigcup_{i=1}^{n+1} A_i = \bigcup_{i=1}^{n+1} \bigcup_{\substack{J\subseteq \{1,\dots,n+1\}, \\ |J|=i}} \left(\bigcap_{j\in J} A_j\right)\setminus \left(\bigcup_{j\notin J} A_j \cap \bigcap_{j\in J} A_j\right).\]
		Using the J-linearity of the closed subschemes $\bigcup_{j\notin J} \left(A_j \cap \bigcap_{j'\in J} A_{j'}\right)$ (by induction hypotheses, since $|J^c|\leq n$) and $\bigcap_{j\in J} A_j$ (by assumption) shows that the strata \[\left(\bigcap_{j\in J} A_j\right)\setminus \left(\bigcup_{j\notin J} A_j \cap \bigcap_{j\in J} A_j \right)\] are all J-linear.
	\end{description}
	
	To see that this actually is a stratification, the boundary condition needs to be checked. Picking a stratum means picking $U=\left(\bigcap_{j\in J} A_j\right)\setminus \left(\bigcup_{j\notin J} A_j \cap \bigcap_{j\in J} A_j\right)$ for some non-empty $J\subseteq\{1,\dots,n\}$. The stratum $U$ is open in the closed and irreducible  
	$\bigcap_{j\in J} A_j$, hence this is the closure of $U$. 
	
	\textbf{Claim:} Strata $U'$ (corresponding to a $J'$) meeting $\overline{U}=\bigcap_{j\in J} A_j$ satisfy $J\subseteq J'$.
    
	Suppose this is not the case and there is a $k\in J\setminus J'$. There cannot be an element both in $\overline{U} = \bigcap_{j\in J} A_j\subseteq A_k$ and
    \[U'=\left(\bigcap_{j\in J'} A_j\right)\setminus \left(\bigcup_{j\notin J'} A_j \cap \bigcap_{j\in J} A_j\right)\subseteq A^c_k.\]
	
	But $J\subseteq J'$ implies the wanted boundary condition \[U' \subseteq \bigcap_{j\in J'} A_j \subseteq \bigcap_{j\in J} A_j= \overline{U}.\]
\end{proof}

\begin{remark}
	Taking $n=3$ in the previous lemma gives the decomposition of the classical Venn diagram in its $7$ parts. Explicitly the stratification will be:
	\begin{compactenum}
		\item[] $A_1\cap A_2 \cap A_3$,
		\item[] $(A_1\cap A_2)\setminus A_3$,\quad $(A_1\cap A_3)\setminus A_2$,\quad $(A_2\cap A_3)\setminus A_1$,
		\item[] $A_1\setminus (A_2\cup A_3)$,\quad $A_2\setminus (A_1\cup A_3)$,\quad $A_3\setminus (A_1\cup A_2)$.
	\end{compactenum}
	The need for induction comes from the fact that for J-linearity the set differences of strata are written slightly different as (e.g.\ for $A_1\setminus (A_2\cup A_3)$):
	\[A_1 \setminus \left( \left( \left(A_1 \cap A_2\right)\setminus \left(A_1\cap A_2 \cap A_3\right) \right) \bigcup \left( \left(A_1 \cap A_3\right)\setminus \left(A_1\cap A_2 \cap A_3\right) \right) \bigcup \left(A_1\cap A_2 \cap A_3\right)\right).\]
\end{remark}

\begin{lemma}\label{lem: J-linearity of product}
	Let $X$ be $n$-J-linear and $Y$ be $m$-J-linear, then $X\times Y$ is $(n+m)$-J-linear.
\end{lemma}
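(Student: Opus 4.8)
The plan is to run a single induction on $n+m$, using at each step the inductive description of J-linearity for one of the two factors together with the fact that forming the product with $Y$ commutes with passing to closed subschemes and open complements. Concretely, if $Z\hookrightarrow W$ is a closed immersion of $k$-schemes with open complement $V=W\setminus Z$, then $Z\times Y\hookrightarrow W\times Y$ is again a closed immersion (closed immersions are stable under base change along $Y\to\Spec k$) and its open complement is $V\times Y=(W\times Y)\setminus(Z\times Y)$. This is the only geometric input needed.

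For the base case $n=m=0$: here $X$ is $\emptyset$ or $\AA^N_k$ and $Y$ is $\emptyset$ or $\AA^M_k$, so $X\times Y$ is either $\emptyset$ or $\AA^{N+M}_k$, hence $0$-J-linear. For the inductive step, assume $n+m\geq 1$ and that the statement holds for all pairs whose index sum is smaller. Since $X\times Y\cong Y\times X$ we may assume $n\geq 1$, so there is a triple $(Z,W,V)$ with $Z\hookrightarrow W$ closed, $V=W\setminus Z$, and either (a) $X=V$ with $Z,W$ being $(n-1)$-J-linear, or (b) $X=W$ with $Z,V$ being $(n-1)$-J-linear.

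In case (a), consider the triple $(Z\times Y,\ W\times Y,\ V\times Y)$. By the induction hypothesis $Z\times Y$ and $W\times Y$ are $(n-1+m)$-J-linear, and $X\times Y=V\times Y=(W\times Y)\setminus(Z\times Y)$, so the ``$Y=U$'' clause of the definition shows $X\times Y$ is $(n+m)$-J-linear. In case (b), again take the triple $(Z\times Y,\ W\times Y,\ V\times Y)$: by the induction hypothesis $Z\times Y$ and $V\times Y$ are $(n-1+m)$-J-linear, and $X\times Y=W\times Y$, so the ``$Y=X$'' clause shows $X\times Y$ is $(n+m)$-J-linear. This completes the induction.

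I do not expect any real obstacle here; the proof is a routine bookkeeping of the recursive definition. The only points deserving a line of justification are that closed immersions (and their open complements) are preserved under $-\times_k Y$, and that $\AA^N_k\times_k\AA^M_k\cong\AA^{N+M}_k$, both of which are standard; some mild care is needed to set up the induction on $n+m$ rather than on $n$ and $m$ separately and to invoke the symmetry $X\times Y\cong Y\times X$ so that ``reduce the first factor'' suffices.
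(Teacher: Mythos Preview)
Your proof is correct and follows essentially the same approach as the paper: reduce one factor's J-linearity index by one via its defining triple, take the product of that triple with the other factor, and apply the induction hypothesis, appealing to symmetry to handle the case $n=0$. Your exposition is slightly more explicit about the induction scheme (on $n+m$) and about the $\emptyset$ case in the base step, but the argument is the same.
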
	

\begin{proof}
	Let $X$ be $n$-J-linear and $Y$ be $m$-J-linear. The idea is to iteratively build the product, since the construction steps for J-linear schemes are stable under taking a product with a fixed scheme. For $(n,m)=(0,0)$ we have $X\cong\AA_k^{\dim(X)}$ and $Y\cong \AA_k^{\dim(Y)}$, so their product is $X\times Y \cong \AA_k^{\dim(X) + \dim(Y)}$ and hence $0$-J-linear.
	
	The claim is that $X\times Y$ is $(n+m)$-J-linear. For $X$ there exists a triple $(Z,X',U)$ with a closed immersion $Z \hookrightarrow X'$ and open complement $U= X'\setminus Z$, such that:
	\begin{description}[font=\normalfont]
		\item[$X=U$]\quad $Z$ and $X'$ are $(n-1)$-J-linear or
		\item[$X=X'$]\quad $Z$ and $U$ are $(n-1)$-J-linear,
	\end{description} 
	by definition of $n$-J-linearity. By taking products with $Y$ we get a triple with a closed immersion $Z\times Y \hookrightarrow X'\times Y$ and open complement $U\times Y = (X'\times Y)\setminus (Z\times Y)$, such that:
	\begin{description}[font=\normalfont]
		\item[$X\times Y=U\times Y$]\quad $Z\times Y$ and $X'\times Y$ are $(n-1+m)$-J-linear or
		\item[$X\times Y=X'\times Y$]\quad $Z\times Y$ and $U\times Y$ are $(n-1+m)$-J-linear.
	\end{description} 
	This shows the induction step (the respective statement for $Y$ follows by symmetry).
\end{proof}

\begin{corollary}\label{cor: A^n x G_m^d cells are d-J-linear}
	The schemes $\AA^n_k\times\mathbb{G}_m^d$ are $d$-J-linear.
\end{corollary}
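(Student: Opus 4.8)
The plan is to deduce this directly from \Cref{lem: J-linearity of product} once we know that the two basic building blocks $\AA^n_k$ and $\mathbb{G}_m$ are $0$-J-linear and $1$-J-linear respectively. The scheme $\AA^n_k$ is $0$-J-linear by definition. For $\mathbb{G}_m=\AA^1_k\setminus\{0\}$ I would take the triple $(Z,X,U)=(\{0\},\AA^1_k,\mathbb{G}_m)$: the closed point $\{0\}\cong\AA^0_k$ and the ambient $\AA^1_k$ are both $0$-J-linear, so applying the defining construction with $Y=U$ shows $\mathbb{G}_m$ is $1$-J-linear.

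Next I would show by induction on $d$ that $\mathbb{G}_m^d$ is $d$-J-linear. The base case $d=0$ gives $\Spec k=\AA^0_k$, which is $0$-J-linear, and $d=1$ is the previous paragraph. For the step, write $\mathbb{G}_m^{d}=\mathbb{G}_m\times\mathbb{G}_m^{d-1}$; since $\mathbb{G}_m$ is $1$-J-linear and $\mathbb{G}_m^{d-1}$ is $(d-1)$-J-linear by the inductive hypothesis, \Cref{lem: J-linearity of product} gives that the product is $(1+(d-1))$-J-linear, i.e. $d$-J-linear.

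Finally, applying \Cref{lem: J-linearity of product} once more to the $0$-J-linear scheme $\AA^n_k$ and the $d$-J-linear scheme $\mathbb{G}_m^d$ yields that $\AA^n_k\times\mathbb{G}_m^d$ is $(0+d)$-J-linear, as claimed. There is essentially no obstacle here: the only point requiring a moment's care is that the definition of $0$-J-linearity permits $N=0$, so that both $\Spec k$ (as the closed stratum cutting $\mathbb{G}_m$ out of $\AA^1_k$) and $\mathbb{G}_m^0=\Spec k$ count as $0$-J-linear, which lets the induction start cleanly.
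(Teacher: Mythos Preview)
Your proof is correct and follows exactly the paper's approach: the paper's proof is the terse two-line version of what you wrote, noting that $\AA^n_k$ is $0$-J-linear by definition and $\mathbb{G}_m=\AA^1_k\setminus\{0\}$ is $1$-J-linear, with the iterated application of \Cref{lem: J-linearity of product} left implicit. Your explicit induction on $d$ and the remark that $N=0$ is allowed simply spell out what the paper takes for granted.
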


\begin{proof}
	The scheme $\AA^n_k$ is $0$-J-linear by definition and $\mathbb{G}_m=\AA^1_k\setminus\{0\}$ is $1$-J-linear.
\end{proof}

We will see in section 4 that this bound is optimal, i.e.\ $\AA^n_k\times\mathbb{G}_m^d$ is not $(d-1)$-J-linear.

The following variant of linear schemes comes from \cite{TotaroLinear}. For this notion of linearity, we provide a complete answer to when the real cycle class map is an isomorphism. 

\begin{definition}
	Let $Y$ be a scheme. Then $Y$ is called:
	\begin{description}[font=\normalfont]
		\item[$0$-T-linear] if $Y=\emptyset$ or $Y\cong\AA^N_k$ for some $N\geq 0$.
		\item[$n$-T-linear] if one of the following holds:
		\begin{compactenum}[(i)]
			\item there exists a triple $(Z,\AA^n_k,Y)$ of $k$-schemes with closed immersion $Z\hookrightarrow \AA^n_k$ and open complement $Y=\AA^N_k\setminus Z$, where $Z$ is $(n-1)$-T-linear
			\item $Y$ is stratified by $(n-1)$-T-linear schemes.
		\end{compactenum}
	\end{description}
	A scheme $Y$ is called T-linear, if it is $n$-T-linear for some $n\geq 0$.
\end{definition}

\begin{remark}
	The difference between J-linear and T-linear schemes is that J-linearity is stable under open complements of linear schemes in arbitrary linear schemes, not just affine spaces $\AA^n_k$ as it is for T-linearity. \Cref{lem stratification by J-linear is J_linear} shows that all T-linear schemes are J-linear. The author does not know any J-linear scheme that is not already T-linear. 
\end{remark}

\section{The real cycle class isomorphism}

Most statements in this section are direct analogs of the corresponding statements in \cite[Chapter 5]{RealCycleClassMap}, with changed bounds. To understand for which power of fundamental ideal the real cycle class map becomes an isomorphism, we need to understand in which range the multiplication $\langle\!\langle -1\rangle\!\rangle: \underline{I}^q\to\underline{I}^{q+1}$ induces an isomorphism on the cohomology groups of $X$. We start by investigating the situation for $\underline{\bar{I}}^q$ and lift the results to $\underline{I}^q$.

\begin{lemma}\label{lem: J-linear schemes have mult -1 Pfister form isomorphism on I bar}
	Let $X$ be a $n$-J-linear scheme over $\RR$, then the multiplication \[\langle\!\langle-1\rangle\!\rangle\colon H^{RS}_i(X,\bar{I}^j)\to H^{RS}_i(X,\bar{I}^{j+1})\] is an isomorphism for all $i \geq - j + n$ and injective for $i=-j+n-1$.
\end{lemma}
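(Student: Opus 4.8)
The plan is to induct on $n$, following the cellular argument of \cite[Chapter 5]{RealCycleClassMap} but tracking the shifted bounds. For the base case $n=0$ the scheme $X$ is either empty, where the statement is vacuous, or $X\cong\AA^N_\RR$. Here Poincaré duality together with the $\AA^1$-invariance of $\underline{I}^q$-cohomology gives $H^{RS}_i(\AA^N_\RR,\bar{I}^j)\cong H^{N-i}(\Spec\RR,\underline{\bar{I}}^{N+j})$, which equals $\bar{I}^{N+j}(\RR)$ for $i=N$ and vanishes otherwise, naturally in the transition map $\langle\!\langle-1\rangle\!\rangle$. Since $\bar{I}^q(\RR)=\ZZ/2\ZZ$ for $q\geq 0$ and vanishes for $q<0$, and multiplication by $\langle\!\langle-1\rangle\!\rangle$ (i.e.\ by $-2$ on $\W(\RR)=\ZZ$) induces an isomorphism $\bar{I}^q(\RR)\to\bar{I}^{q+1}(\RR)$ for every $q\geq 0$, the map $\langle\!\langle-1\rangle\!\rangle$ on $H^{RS}_i(\AA^N_\RR,\bar{I}^j)$ fails to be an isomorphism only when $i=N$ and $N+j<0$. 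In particular it is an isomorphism for $i\geq -j$ and injective for $i=-j-1$ (where the source already vanishes), which is the asserted range for $n=0$.

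For the inductive step, let $X$ be $n$-J-linear with $n\geq 1$ and choose a triple $(Z,X',U)$ with $Z\hookrightarrow X'$ closed, $U=X'\setminus Z$ open, and either \textbf{(a)} $X=U$ with $Z$ and $X'$ both $(n-1)$-J-linear, or \textbf{(b)} $X=X'$ with $Z$ and $U$ both $(n-1)$-J-linear. As $\langle\!\langle-1\rangle\!\rangle\colon\underline{\bar{I}}^j\to\underline{\bar{I}}^{j+1}$ commutes with residue maps, it is natural for the localization sequence, so it induces a commutative ladder between the localization sequences of $(Z,X',U)$ with coefficients $\bar{I}^j$ and $\bar{I}^{j+1}$. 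In case \textbf{(a)} the term $H^{RS}_i(U,\bar{I}^\bullet)$ occupies the middle of the five-term exact segment $H^{RS}_i(Z)\to H^{RS}_i(X')\to H^{RS}_i(U)\to H^{RS}_{i-1}(Z)\to H^{RS}_{i-1}(X')$; in case \textbf{(b)} the term $H^{RS}_i(X',\bar{I}^\bullet)$ occupies the middle of the segment $H^{RS}_{i+1}(U)\to H^{RS}_i(Z)\to H^{RS}_i(X')\to H^{RS}_i(U)\to H^{RS}_{i-1}(Z)$. In both cases all four flanking terms are cohomology of the $(n-1)$-J-linear schemes $Z,X',U$, so the induction hypothesis applies to them with all bounds lowered by one. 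A short check of the four resulting inequalities shows that for $i\geq -j+n$ every flanking vertical map is an isomorphism, so the five lemma forces the middle map to be an isomorphism; and for $i=-j+n-1$ the three flanking maps needed for the four lemma are isomorphisms except possibly the one of lowest homological index $-j+(n-1)-1$, which is injective by the hypothesis, so the middle map is injective. This closes the induction.

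I expect the only real work to be this index bookkeeping: one must verify, for each flanking term in each of the two (symmetric) constructions \textbf{(a)} and \textbf{(b)}, that the degree shift inherent in the localization sequence is exactly compensated by passing from the $n$-bound to the $(n-1)$-bound. This is also where one sees why the injectivity clause must be part of the statement rather than a consequence of the isomorphism range: in construction \textbf{(a)} the four-lemma argument for injectivity at $i=-j+n-1$ uses precisely the injectivity of $H^{RS}_{i-1}(Z,\bar{I}^\bullet)=H^{RS}_{-j+(n-1)-1}(Z,\bar{I}^\bullet)$, so without carrying this clause through the induction the argument would not close. Everything else---homotopy invariance over $\AA^N_\RR$, naturality of the localization sequence under $\langle\!\langle-1\rangle\!\rangle$, and the trivial structure of $\W(\RR)$---is standard.
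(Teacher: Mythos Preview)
Your argument is correct and matches the paper's proof: induction on $n$, the base case via homotopy invariance for $\AA^N_\RR$, and the inductive step via the five- and four-lemmas applied to the $\langle\!\langle-1\rangle\!\rangle$-ladder of localization sequences for $(Z,X',U)$. The only difference is that the paper treats the stratification case $X=X'$ separately and records there the slightly sharper conclusion that the map is already an isomorphism (not merely injective) at $i=-j+n-1$, a fact reused later in \Cref{cor: stratified by J-linear schemes have mult -1 Pfister form isomorphism on I bar}.
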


\begin{proof}
	Proceed by induction on $n$. For $n=0$ we have $X\cong \AA_\RR^{\dim(X)}$. By $\AA^1$-invariance both sides vanish for $i > 0$ and for $i = 0$ the map becomes
	\[2=\langle\!\langle-1\rangle\!\rangle\colon\quad 2^j\ZZ = \bar{I}^j(\RR) \cong H^{RS}_0(X,\bar{I}^j)\to H^{RS}_0(X,\bar{I}^{j+1}) \cong \bar{I}^{j+1}(\RR) = 2^{j+1}\ZZ, \]
	which is an isomorphism.
	
	Suppose we have a closed immersion $Z\hookrightarrow X'$ with open complement $U=X'\setminus Z$ then the localization sequence becomes
	$$
	\begin{tikzcd}[column sep = 0.95em]
	H^{RS}_{i+1}(U,\bar{I}^j) \arrow[r]\arrow[d, "\langle\!\langle-1\rangle\!\rangle"] & H^{RS}_{i}(Z,\bar{I}^j) \arrow[r]\arrow[d, "\langle\!\langle-1\rangle\!\rangle"] & H^{RS}_{i}(X',\bar{I}^j) \arrow[r]\arrow[d, "\langle\!\langle-1\rangle\!\rangle"] & H^{RS}_{i}(U,\bar{I}^j) \arrow[r]\arrow[d, "\langle\!\langle-1\rangle\!\rangle"] & H^{RS}_{i-1}(Z,\bar{I}^j)\arrow[d, "\langle\!\langle-1\rangle\!\rangle"]\\
	H^{RS}_{i+1}(U,\bar{I}^{j+1}) \arrow[r] & H^{RS}_{i}(Z,\bar{I}^{j+1}) \arrow[r] & H^{RS}_{i}(X',\bar{I}^{j+1}) \arrow[r] & H^{RS}_{i}(U,\bar{I}^{j+1}) \arrow[r] & H^{RS}_{i-1}(Z,\bar{I}^{j+1}),
	\end{tikzcd}
	$$
	where the diagram commutes as a diagram of $\W(\RR)$-modules. 
	
	If $X=X'$, and $Z$,$U$ are $(n-1)$-J-linear the first, second and fourth vertical morphisms are isomorphisms ($i \geq -j + n - 1$) and the fifth is injective ($i-1\geq -j + n -2 $) by induction. Therefore, the morphism in the middle is an isomorphism by the 5-lemma for all $i\geq-j+n-1$.
	
	For the other case of $X=U$, and $Z$,$X'$ are $(n-1)$-J-linear the part of the localization sequence is 
	$$
	\begin{tikzcd}[column sep = 0.83em]
	H^{RS}_{i}(Z,\bar{I}^j) \arrow[r]\arrow[d, "\langle\!\langle-1\rangle\!\rangle"] & H^{RS}_{i}(X',\bar{I}^j) \arrow[r]\arrow[d, "\langle\!\langle-1\rangle\!\rangle"] & H^{RS}_{i}(U,\bar{I}^j) \arrow[r]\arrow[d, "\langle\!\langle-1\rangle\!\rangle"] & H^{RS}_{i-1}(Z,\bar{I}^j) \arrow[r]\arrow[d, "\langle\!\langle-1\rangle\!\rangle"] & H^{RS}_{i-1}(X',\bar{I}^j)\arrow[d, "\langle\!\langle-1\rangle\!\rangle"]\\
	H^{RS}_{i}(Z,\bar{I}^{j+1}) \arrow[r] & H^{RS}_{i}(X',\bar{I}^{j+1}) \arrow[r] & H^{RS}_{i}(U,\bar{I}^{j+1}) \arrow[r] & H^{RS}_{i-1}(Z,\bar{I}^{j+1}) \arrow[r] & H^{RS}_{i-1}(X',\bar{I}^{j+1}).
	\end{tikzcd}
	$$
	For $i \geq - j + n$ the first, second, fourth and fifth vertical morphisms are isomorphisms and the 5-lemma gives the middle isomorphism again. For $i = - j + n - 1$ the first and second are still isomorphisms, but the fourth is in general just injective ($i-1 = -j + n -1$) so the 4-lemma shows, that the middle morphism is injective.
\end{proof}

\begin{remark}
	The possible failure of surjectivity can already be seen at the end of the localization sequence. The upper row vanishes one degree before the lower one.
	$$
	\begin{tikzcd}
	H^{RS}_{i}(Z,\bar{I}^{-i}) \arrow[r]\arrow[d, "\langle\!\langle-1\rangle\!\rangle"] & H^{RS}_{i}(X',\bar{I}^{-i}) \arrow[r]\arrow[d, "\langle\!\langle-1\rangle\!\rangle"] & H^{RS}_{i}(U,\bar{I}^{-i}) \arrow[r]\arrow[d, "\langle\!\langle-1\rangle\!\rangle"] & 0 \arrow[d, "\langle\!\langle-1\rangle\!\rangle"] \\
	H^{RS}_{i}(Z,\bar{I}^{-i+1}) \arrow[r] & H^{RS}_{i}(X',\bar{I}^{-i+1}) \arrow[r] & H^{RS}_{i}(U,\bar{I}^{-i+1}) \arrow[r] & H^{RS}_{i-1}(Z,\bar{I}^{-i+1})
	\end{tikzcd}
	$$
	For the use in a concrete example the bounds could be improved by keeping track of which construction steps are used in making the $n$-J-linear scheme. For example schemes with stratifications by affine spaces have the full range $i\geq -j$ where the multiplication $\langle\!\langle-1\rangle\!\rangle\colon H^{RS}_i(X,\bar{I}^j)\to H^{RS}_i(X,\bar{I}^{j+1})$ is an isomorphism.
\end{remark}

\begin{remark}
	The requirement to be defined over $\RR$ is somewhat necessary, because for general fields the $i=0$ case does not hold. For finite fields $\FF_q$ (or more generally fields in which every binary quadratic form is universal) we have $I(\FF_q) = \FF_q^\ast/(\FF_q^\ast)^2\cong\ZZ/2\ZZ$ and $I^2(\FF_q) = 0$, so there cannot be an isomorphism from $\bar{I}^1(\FF_q)\cong\ZZ/2\ZZ$ to $\bar{I}^2(\FF_q)=0$.
	
	The proof works for any field $F$ where the $i=0$ case holds, i.e.\ $\langle\!\langle -1 \rangle\!\rangle\colon \bar{I}^j(F)\to \bar{I}^{j+1}(F)$ is an isomorphism for $j\geq 0$.
\end{remark}

\begin{corollary}\label{cor: smooth J-linear schemes have mult -1 Pfister form isomorphism on I bar}
	Let $X$ be a  smooth, $n$-J-linear scheme over $\RR$, then the multiplication \[\langle\!\langle-1\rangle\!\rangle \colon H^i(X,\bar{I}^j)\to H^i(X,\bar{I}^{j+1})\] is an isomorphism for all $j\geq i + n$ and injective for $j=i + n -1$.
\end{corollary}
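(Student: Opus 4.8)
The plan is to deduce this from \Cref{lem: J-linear schemes have mult -1 Pfister form isomorphism on I bar} by Poincaré duality. First I would remark that the proof of that lemma uses nothing but the localization sequence, $\AA^1$-invariance, and the five and four lemmas, and that all of these remain available after twisting by an arbitrary line bundle $\mathcal{L}$ on $X$. Hence the twisted statement holds as well: for any $n$-J-linear $\RR$-scheme $X$ and any line bundle $\mathcal{L}$ on $X$, the map
\[\langle\!\langle-1\rangle\!\rangle\colon H^{RS}_a(X,\bar I^b(\mathcal{L}))\longrightarrow H^{RS}_a(X,\bar I^{b+1}(\mathcal{L}))\]
is an isomorphism for $a\geq -b+n$ and injective for $a=-b+n-1$.

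Next, writing $d=\dim(X)$, I would use the Poincaré duality identification recalled in Section 2, which holds with $\bar I^q$ in place of $I^q$ since the pertinent Rost--Schmid complexes and the sheaves $\underline{\bar I}^q$ fit into the evident short exact sequences and every identification in Section 2 is natural and $\W(\RR)$-linear. Applying it with the line bundle $\mathcal{L}=\omega_{X/k}^{\vee}$, so that $\mathcal{L}\otimes\omega_{X/k}\cong\mathcal{O}_X$, yields a natural $\W(\RR)$-linear isomorphism
\[H^{RS}_{a}\bigl(X,\bar I^{\,b}(\omega_{X/k}^{\vee})\bigr)\;\cong\;H^{\,d-a}\bigl(X,\underline{\bar I}^{\,d+b}\bigr)\]
that intertwines the two maps given by multiplication with $\langle\!\langle-1\rangle\!\rangle$.

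It then remains to do the index bookkeeping. Substituting $a=d-i$ and $b=j-d$ identifies the map in question, $\langle\!\langle-1\rangle\!\rangle\colon H^i(X,\underline{\bar I}^j)\to H^i(X,\underline{\bar I}^{j+1})$, with $\langle\!\langle-1\rangle\!\rangle\colon H^{RS}_{d-i}(X,\bar I^{j-d}(\omega_{X/k}^{\vee}))\to H^{RS}_{d-i}(X,\bar I^{j-d+1}(\omega_{X/k}^{\vee}))$. Under this substitution the condition $a\geq -b+n$ of the twisted lemma becomes $d-i\geq-(j-d)+n$, i.e.\ $j\geq i+n$, and the condition $a=-b+n-1$ becomes $j=i+n-1$. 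Applying the twisted form of \Cref{lem: J-linear schemes have mult -1 Pfister form isomorphism on I bar} with these indices gives exactly the claimed isomorphism for $j\geq i+n$ and injectivity for $j=i+n-1$.

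The only step that requires genuine care is the compatibility of Poincaré duality with the $\langle\!\langle-1\rangle\!\rangle$-multiplication, together with its validity for $\underline{\bar I}^j$ rather than just $\underline{I}^j$; I expect this to be the main point to spell out, but it is a formal consequence of the naturality and $\W$-linearity of the constructions of Section 2 and brings in no new geometric input.
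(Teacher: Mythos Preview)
Your argument is essentially the paper's: apply Poincar\'e duality to translate \Cref{lem: J-linear schemes have mult -1 Pfister form isomorphism on I bar} from Rost--Schmid homology to sheaf cohomology, then do the index bookkeeping $a=\dim(X)-i$, $b=j-\dim(X)$. The paper does exactly this in two lines.

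The one place where you diverge is the handling of twists. You propose to first prove a twisted version of \Cref{lem: J-linear schemes have mult -1 Pfister form isomorphism on I bar} and then take $\mathcal{L}=\omega_{X/k}^\vee$. This detour is not only unnecessary but, as stated, slightly problematic: in the inductive step of that lemma one passes through a triple $(Z,X',U)$ with $X=U$, and a line bundle on $U$ need not extend to $X'$, so ``the localization sequence remains available after twisting by $\mathcal{L}$ on $X$'' is not literally true in general. The reason the paper can ignore twists altogether is that for $\bar I^{\,q}=I^q/I^{q+1}$ the $F^\times$-action via $\langle a\rangle$ is trivial (since $\langle a\rangle-1=\langle\!\langle a\rangle\!\rangle\in I$), so $\bar I^{\,q}(F,L)\cong\bar I^{\,q}(F)$ canonically and the twist by $\omega_{X/k}$ in the duality identification disappears. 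Once you use that observation, your argument and the paper's coincide.
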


\begin{proof}
	For $X$ smooth we have $H^{RS}_{\dim(X)-i}(X,\bar{I}^{j-\dim(X)}) = H^{i}(C^{RS}(X,\bar{I}^j)) \cong H^i(X,\bar{I}^j)$ and therefore, 
	\begin{align*}
		H^i(X,\bar{I}^j) = H^{RS}_{\dim(X)-i}(X,\bar{I}^{j-\dim(X)}) \overset{\cong}{\longrightarrow} H^{RS}_{\dim(X)-i}(X,\bar{I}^{j-\dim(X)+1}) = H^i(X,\bar{I}^{j+1}),
	\end{align*}
	since $\dim(X)-i \geq -j+\dim(X) + n$, which holds by assumption $j\geq i + n$. The injectivity part holds for $\dim(X)-i = -j + \dim(X) + n -1$.
\end{proof}

\begin{corollary}\label{cor: stratified by J-linear schemes have mult -1 Pfister form isomorphism on I bar}
	Let $X$ be a  smooth scheme over $\RR$ which is stratified by $d$-J-linear schemes, then the multiplication \[\langle\!\langle-1\rangle\!\rangle \colon H^i(X,\bar{I}^j)\to H^i(X,\bar{I}^{j+1})\] is an isomorphism for all $j\geq i + d$ and injective for $j=i + d -1$.
\end{corollary}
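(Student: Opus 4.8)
The plan is to first prove a version of the statement for Rost–Schmid homology that carries no smoothness hypothesis, establish it by induction on the number of strata via the localization sequence, and then transport it to $\underline{\bar I}$-cohomology using Poincaré duality. One cannot simply quote \Cref{cor: smooth J-linear schemes have mult -1 Pfister form isomorphism on I bar}: by \Cref{lem stratification by J-linear is J_linear} a scheme stratified by $d$-J-linear schemes is J-linear, but the index $n$ of the resulting $n$-J-linear structure grows with the number of strata, so the bound $j\ge i+n$ would be useless. The whole point is that no such degradation occurs for $\langle\!\langle-1\rangle\!\rangle$.

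So I would first show: if $X$ is a finite type separated $\RR$-scheme admitting a finite stratification by $d$-J-linear schemes, then $\langle\!\langle-1\rangle\!\rangle\colon H^{RS}_i(X,\bar{I}^j)\to H^{RS}_i(X,\bar{I}^{j+1})$ is an isomorphism for $i\ge -j+d$ and injective for $i=-j+d-1$. The induction is on the number $n$ of strata. For $n=1$ the scheme $X$ equals its single stratum, hence is $d$-J-linear, and \Cref{lem: J-linear schemes have mult -1 Pfister form isomorphism on I bar} gives the claim verbatim. For $n>1$, as in the discussion following \Cref{lem stratification by J-linear is J_linear}, pick a stratum $Z$ with $\overline{Z}=Z$; such a closed stratum always exists and is $d$-J-linear. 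Its open complement $U=X\setminus Z$ is stratified by the remaining $n-1$ strata — the boundary condition is inherited upon restricting closures to the open subset $U$ — and these are again $d$-J-linear, so the induction hypothesis applies to $U$. Feeding $Z$ (by \Cref{lem: J-linear schemes have mult -1 Pfister form isomorphism on I bar}) and $U$ (by the induction hypothesis) into the localization sequence
$$
\begin{tikzcd}[column sep=0.8em]
H^{RS}_{i+1}(U,\bar{I}^j) \arrow[r]\arrow[d, "\langle\!\langle-1\rangle\!\rangle"] & H^{RS}_{i}(Z,\bar{I}^j) \arrow[r]\arrow[d, "\langle\!\langle-1\rangle\!\rangle"] & H^{RS}_{i}(X,\bar{I}^j) \arrow[r]\arrow[d, "\langle\!\langle-1\rangle\!\rangle"] & H^{RS}_{i}(U,\bar{I}^j) \arrow[r]\arrow[d, "\langle\!\langle-1\rangle\!\rangle"] & H^{RS}_{i-1}(Z,\bar{I}^j)\arrow[d, "\langle\!\langle-1\rangle\!\rangle"]\\
H^{RS}_{i+1}(U,\bar{I}^{j+1}) \arrow[r] & H^{RS}_{i}(Z,\bar{I}^{j+1}) \arrow[r] & H^{RS}_{i}(X,\bar{I}^{j+1}) \arrow[r] & H^{RS}_{i}(U,\bar{I}^{j+1}) \arrow[r] & H^{RS}_{i-1}(Z,\bar{I}^{j+1})
\end{tikzcd}
$$
I would conclude exactly as in \Cref{lem: J-linear schemes have mult -1 Pfister form isomorphism on I bar}: for $i\ge -j+d$ the first vertical map sits in degree $i+1\ge -j+d+1$ and is an isomorphism, the second and fourth sit in degree $\ge -j+d$ and are isomorphisms, the fifth sits in degree $i-1\ge -j+d-1$ and is injective, so the $5$-lemma makes the middle map an isomorphism; for $i=-j+d-1$ the first vertical map is an isomorphism in degree $-j+d$ (hence surjective) while the second and fourth are injective in degree $-j+d-1$, so the $4$-lemma makes the middle map injective. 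This is precisely the bookkeeping of the $X=X'$ case in \Cref{lem: J-linear schemes have mult -1 Pfister form isomorphism on I bar}, now with $d$ in place of $n$, and no index is lost because $Z$ is itself $d$-J-linear.

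To finish, I would deduce the corollary from this statement exactly as \Cref{cor: smooth J-linear schemes have mult -1 Pfister form isomorphism on I bar} follows from \Cref{lem: J-linear schemes have mult -1 Pfister form isomorphism on I bar}: for smooth $X$, Poincaré duality identifies $H^i(X,\bar{I}^j)$ with $H^{RS}_{\dim(X)-i}(X,\bar{I}^{j-\dim(X)})$ compatibly with $\langle\!\langle-1\rangle\!\rangle$, the inequality $j\ge i+d$ translates into $\dim(X)-i\ge -(j-\dim(X))+d$, and likewise $j=i+d-1$ into the corresponding boundary equality, so the Rost–Schmid statement yields the assertion. The main obstacle is the inductive step: one must check that excising a closed stratum really leaves a stratification of the complement, and must keep careful track of which homological degree each vertical arrow lands in so that the hypotheses of the $5$- and $4$-lemmas hold with the right inequalities. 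Since the strata are in general singular, it is essential that the induction is run on Rost–Schmid homology, where \Cref{lem: J-linear schemes have mult -1 Pfister form isomorphism on I bar} needs no smoothness hypothesis; smoothness of $X$ enters only through the final duality step.
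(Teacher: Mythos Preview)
Your proposal is correct and follows exactly the approach the paper has in mind: the paper's one-line proof just observes that the $X=X'$ case in the proof of \Cref{lem: J-linear schemes have mult -1 Pfister form isomorphism on I bar} does not degrade the bound, which is precisely the induction on the number of strata you spell out, followed by the same Poincar\'e-duality translation as in \Cref{cor: smooth J-linear schemes have mult -1 Pfister form isomorphism on I bar}. Your version is more detailed but substantively identical.
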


 \begin{proof}
 	In the proof of \Cref{lem: J-linear schemes have mult -1 Pfister form isomorphism on I bar} the stratification step keeps the bounds, where the multiplication is an isomorphism, unchanged.
 \end{proof}

\begin{remark}\label{rem: at worst A^n x G_m^d cells}
	Using \Cref{cor: A^n x G_m^d cells are d-J-linear} together with \Cref{cor: stratified by J-linear schemes have mult -1 Pfister form isomorphism on I bar} shows that smooth schemes stratified by cells of the form $\AA^n_k\times \mathbb{G}_m^{d'}$ with all $d'\leq d$ satisfy the above property.
\end{remark}

In the more restrictive notion of T-linearity one can get a precise description of the influence the non-cellular part has.

\begin{lemma}\label{lem: T-linear analysis}
	Let $(Z,\AA^n_\RR,X)$ be a triple of $\RR$-schemes with closed immersion $Z\hookrightarrow \AA^n_\RR$ and open complement $X=\AA^n_\RR\setminus Z\neq\emptyset$, then the multiplication \[\langle\!\langle-1\rangle\!\rangle\colon H^{RS}_i(X,\bar{I}^j)\to H^{RS}_i(X,\bar{I}^{j+1})\] is an isomorphism for $(i,j)$ in the following cases:
	\begin{description}[font=\normalfont]
		\item[$i +j = 0$] if and only if $H^{RS}_{i-1}(Z,\bar{I}^{-i+1})=0$
		\item[$i+j\geq 1$] if and only if this holds for $(i-1,j)$ on $Z$ 
	\end{description}
\end{lemma}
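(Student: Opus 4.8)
The plan is to run the localization sequence for the closed immersion $Z\hookrightarrow\AA^n_\RR$ with open complement $X$ and to compare, via $\langle\!\langle-1\rangle\!\rangle$, the two long exact sequences obtained with coefficients $\bar I^j$ and $\bar I^{j+1}$, exploiting that $\AA^n_\RR$ contributes as little as possible. First I would collect the facts about $\AA^n_\RR$: being smooth with trivial canonical bundle, $H^{RS}_l(\AA^n_\RR,\bar I^m)\cong H^{n-l}(\AA^n_\RR,\underline{\bar{I}}^{n+m})\cong H^{n-l}(\Spec\RR,\underline{\bar{I}}^{n+m})$ by homotopy invariance, so this vanishes for $l\neq n$ and equals $\bar I^{n+m}(\RR)$ for $l=n$; the latter is $\ZZ/2$ when $n+m\geq 0$ and $0$ otherwise, and by \Cref{lem: J-linear schemes have mult -1 Pfister form isomorphism on I bar} applied to the $0$-J-linear scheme $\AA^n_\RR$ the map $\langle\!\langle-1\rangle\!\rangle$ on $H^{RS}_n(\AA^n_\RR,\bar I^m)$ is an isomorphism as soon as $n+m\geq 0$. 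I would also record that $\dim Z\leq n-1$, since $X\neq\emptyset$ and $\AA^n_\RR$ is irreducible, so that $H^{RS}_n(Z,\bar I^m)=0$, and that $C_{RS}(Z,\bar I^m)_l=\bigoplus_{z\in Z_{(l)}}\bar I^{m+l}(\kappa(z),\ldots)=0$ whenever $m+l<0$, because $\bar I^{<0}(F,L)=0$.

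Then I would split into the cases $i<n$ and $i=n$ (the case $i>n$ being vacuous, since then $H^{RS}_i(X,\bar I^j)$, $H^{RS}_i(X,\bar I^{j+1})$ and $H^{RS}_{i-1}(Z,\bar I^j)$ all vanish for dimension reasons). If $i<n$, both $H^{RS}_i(\AA^n_\RR,\bar I^m)$ and $H^{RS}_{i-1}(\AA^n_\RR,\bar I^m)$ vanish, so the localization sequence collapses to a natural isomorphism $H^{RS}_i(X,\bar I^m)\cong H^{RS}_{i-1}(Z,\bar I^m)$ for $m=j,j+1$, compatible with $\langle\!\langle-1\rangle\!\rangle$; hence $\langle\!\langle-1\rangle\!\rangle$ is an isomorphism on $H^{RS}_i(X,\bar I^j)$ if and only if it is on $H^{RS}_{i-1}(Z,\bar I^j)$. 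If $i=n$, then $H^{RS}_n(Z,\bar I^m)=0$ and $H^{RS}_{n-1}(\AA^n_\RR,\bar I^m)=0$, so the localization sequence yields a short exact sequence
\[0\longrightarrow H^{RS}_n(\AA^n_\RR,\bar I^m)\longrightarrow H^{RS}_n(X,\bar I^m)\longrightarrow H^{RS}_{n-1}(Z,\bar I^m)\longrightarrow 0,\]
natural in $m$. Under the standing hypothesis $i+j\geq 0$, i.e.\ $n+j\geq 0$, the map $\langle\!\langle-1\rangle\!\rangle$ on the left-hand term is an isomorphism, so the snake lemma applied to the induced map of short exact sequences identifies the kernel and cokernel of $\langle\!\langle-1\rangle\!\rangle$ on $H^{RS}_n(X,\bar I^j)$ with those of $\langle\!\langle-1\rangle\!\rangle$ on $H^{RS}_{n-1}(Z,\bar I^j)$. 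In both cases one concludes that $\langle\!\langle-1\rangle\!\rangle$ is an isomorphism on $H^{RS}_i(X,\bar I^j)$ if and only if it is an isomorphism on $H^{RS}_{i-1}(Z,\bar I^j)$.

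This is precisely the assertion in the case $i+j\geq 1$. For the case $i+j=0$, writing $j=-i$, I would observe that $C_{RS}(Z,\bar I^{-i})_{i-1}=\bigoplus_{z\in Z_{(i-1)}}\bar I^{-1}(\kappa(z),\ldots)=0$, so $H^{RS}_{i-1}(Z,\bar I^{-i})=0$; then the equivalence just established reads ``$H^{RS}_{i-1}(Z,\bar I^{-i+1})=0$'', which is the stated criterion. The one genuinely non-formal point is the top degree $i=n$, where $\AA^n_\RR$ actually enters and a five-lemma/snake argument replaces a plain degeneration of the localization sequence; this is exactly where the hypothesis $i+j\geq 0$ is used, and the only thing to check is that it is consistent with the two stated cases, which it is because $n+j=i+j$ when $i=n$.
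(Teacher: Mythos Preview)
Your proof is correct and follows the same approach as the paper: run the localization sequence for $Z\hookrightarrow\AA^n_\RR$ with the two coefficient systems $\bar I^j$ and $\bar I^{j+1}$, use homotopy invariance to reduce the $\AA^n_\RR$-terms to $\bar I^{n+m}(\RR)$ concentrated in degree $n$, note $\dim Z\le n-1$, and compare via $\langle\!\langle-1\rangle\!\rangle$. The only difference is organizational: the paper splits into four cases (according to whether $i=n$ or $i<n$, and whether $i+j=0$ or $i+j\ge 1$) and invokes the five-lemma, whereas you split only on $i=n$ versus $i<n$, use the snake lemma in the $i=n$ case, and then treat the boundary case $i+j=0$ uniformly at the end via the vanishing $\bar I^{-1}=0$. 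Your packaging is slightly more economical, but the content is the same.
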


\begin{proof}
	The case distinction is necessary because of the previously mentioned vanishing in the localization sequence. Note that $\dim(Z)\leq n-1$ (otherwise $Z=\AA^n_\RR$ and $X=\emptyset$). The localization sequence here is:
	$$
	\begin{tikzcd}[column sep = 0.73em]
	H^{RS}_{i}(Z,\bar{I}^j) \arrow[r]\arrow[d, "\langle\!\langle-1\rangle\!\rangle"] & H^{RS}_{i}(\AA^n_\RR,\bar{I}^j) \arrow[r]\arrow[d, "\langle\!\langle-1\rangle\!\rangle"] & H^{RS}_{i}(X,\bar{I}^j) \arrow[r]\arrow[d, "\langle\!\langle-1\rangle\!\rangle"] & H^{RS}_{i-1}(Z,\bar{I}^j) \arrow[r]\arrow[d, "\langle\!\langle-1\rangle\!\rangle"] & H^{RS}_{i-1}(\AA^n_\RR,\bar{I}^j)\arrow[d, "\langle\!\langle-1\rangle\!\rangle"]\\
	H^{RS}_{i}(Z,\bar{I}^{j+1}) \arrow[r] & H^{RS}_{i}(\AA^n_\RR,\bar{I}^{j+1}) \arrow[r] & H^{RS}_{i}(X,\bar{I}^{j+1}) \arrow[r] & H^{RS}_{i-1}(Z,\bar{I}^{j+1}) \arrow[r] & H^{RS}_{i-1}(\AA^n_\RR,\bar{I}^{j+1}).
	\end{tikzcd}
	$$
	\begin{description}[font=\normalfont]
		\item[$n = i = -j$] The diagram becomes 
		$$
		\begin{tikzcd}
		0 \arrow[r]\arrow[d] & \bar{I}^0(\RR) \arrow[r]\arrow[d, "\langle\!\langle-1\rangle\!\rangle"] & H^{RS}_{n}(X,\bar{I}^{-n}) \arrow[r]\arrow[d, "\langle\!\langle-1\rangle\!\rangle"] & 0 \arrow[r]\arrow[d] & 0\arrow[d]\\
		0 \arrow[r] & \bar{I}^1(\RR) \arrow[r] & H^{RS}_{n}(X,\bar{I}^{-n+1}) \arrow[r] & H^{RS}_{n-1}(Z,\bar{I}^{n+1}) \arrow[r] & 0,
		\end{tikzcd}
		$$
		by $\dim(Z)<n$, homotopy invariance and the vanishing of $H^{RS}$ in that range. The second vertical map is an isomorphism, so by the five lemma the middle morphism is an isomorphism if and only if the fourth one is.
		\item[$n-1\geq i = -j$] The diagram becomes 
		$$
		\begin{tikzcd}
		\ast \arrow[r]\arrow[d] & 0 \arrow[r]\arrow[d] & H^{RS}_{i}(X,\bar{I}^{-i}) \arrow[r]\arrow[d, "\langle\!\langle-1\rangle\!\rangle"] & 0 \arrow[r]\arrow[d] & 0\arrow[d]\\
		\ast \arrow[r] & 0 \arrow[r] & H^{RS}_{i}(X,\bar{I}^{-i+1}) \arrow[r] & H^{RS}_{i-1}(Z,\bar{I}^{i+1}) \arrow[r] & 0,
		\end{tikzcd}
		$$
		by the same reasoning. Now we have $H^{RS}_{i}(X,\bar{I}^{-i})\cong 0$ from the upper exact sequence and $H^{RS}_{i}(X,\bar{I}^{-i+1}) \cong H^{RS}_{i-1}(Z,\bar{I}^{i+1})$ from the lower one. Therefore, the middle map is an isomorphism if and only if $ H^{RS}_{i-1}(Z,\bar{I}^{i+1}) \cong 0$.
		\item[$i+j\geq 1,\ i=n$] The diagram becomes 
		$$
		\begin{tikzcd}
		0 \arrow[r]\arrow[d] & \bar{I}^{n+j}(\RR) \arrow[r]\arrow[d, "\langle\!\langle-1\rangle\!\rangle"] & H^{RS}_{n}(X,\bar{I}^j) \arrow[r]\arrow[d, "\langle\!\langle-1\rangle\!\rangle"] & H^{RS}_{n-1}(Z,\bar{I}^j) \arrow[r]\arrow[d, "\langle\!\langle-1\rangle\!\rangle"] & 0\arrow[d]\\
		0 \arrow[r] & \bar{I}^{n+j+1}(\RR) \arrow[r] & H^{RS}_{n}(X,\bar{I}^{j+1}) \arrow[r] & H^{RS}_{n-1}(Z,\bar{I}^{j+1}) \arrow[r] & 0,
		\end{tikzcd}
		$$
		by the same reasoning. The second map is an isomorphism, so the third map is an isomorphism if and only if the fourth is.
		\item[$i+j\geq 1,\ i\leq n-1$] The diagram becomes 
		$$
		\begin{tikzcd}
		\ast \arrow[r]\arrow[d] & 0 \arrow[r]\arrow[d] & H^{RS}_{i}(X,\bar{I}^j) \arrow[r]\arrow[d, "\langle\!\langle-1\rangle\!\rangle"] & H^{RS}_{i-1}(Z,\bar{I}^j) \arrow[r]\arrow[d, "\langle\!\langle-1\rangle\!\rangle"] & 0 \arrow[d]\\
		\ast \arrow[r] & 0 \arrow[r] & H^{RS}_{i}(X,\bar{I}^{j+1}) \arrow[r] & H^{RS}_{i-1}(Z,\bar{I}^{j+1}) \arrow[r] & 0,
		\end{tikzcd}
		$$
		again by the same reasoning. Now we have isomorphisms $H^{RS}_{i}(X,\bar{I}^j) \cong H^{RS}_{i-1}(Z,\bar{I}^j)$ and $H^{RS}_{i}(X,\bar{I}^{j+1}) \cong H^{RS}_{i-1}(Z,\bar{I}^{j+1})$, showing the assertion in the last case.
	\end{description}
	This finishes the proof.
\end{proof}

\begin{remark}
	Assuming $X$ and $Z$ to be smooth in the previous lemma means that \[\langle\!\langle-1\rangle\!\rangle\colon H^i(X,\bar{I}^i)\to H^i(X,\bar{I}^{i+1})\] is an isomorphism if and only if $H^{i-c-1}(Z,\bar{I}^{i-c+1})=0$ for $c=\codim_{\AA^n_\RR}(Z)=n-\dim(Z)$.
\end{remark}

\begin{proposition}\label{prop: mult -1 Pister form isomorphism on I for mult -1 Pister form isomorphism on I bar}
	Let $X$ be a smooth scheme over $\RR$ for which the multiplication \[\langle\!\langle-1\rangle\!\rangle \colon H^i(X,\bar{I}^j)\to H^i(X,\bar{I}^{j+1})\] is an isomorphism for all $j\geq i+n$, then
	\[\langle\!\langle-1\rangle\!\rangle \colon H^i(X,I^j(\mathcal{L}))\to H^i(X,I^{j+1}(\mathcal{L}))\] is an isomorphism for all $j\geq i+n$ and all line bundles $\mathcal{L}$ on $X$.
\end{proposition}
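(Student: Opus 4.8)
The plan is to deduce the statement for $\underline{I}^{\bullet}(\mathcal{L})$ from the one for $\underline{\bar{I}}^{\bullet}$ by means of the short exact sequence of sheaves
\[
0 \longrightarrow \underline{I}^{j+1}(\mathcal{L}) \longrightarrow \underline{I}^{j}(\mathcal{L}) \longrightarrow \underline{\bar{I}}^{j} \longrightarrow 0 .
\]
The quotient here is the \emph{untwisted} sheaf $\underline{\bar{I}}^{j}$: since $\langle a\rangle-\langle 1\rangle\in I(F)$ for every unit $a$ of every residue field $F$, the group of units acts trivially on $\bar{I}^{j}=I^{j}/I^{j+1}$, so the twist by $\mathcal{L}$ disappears on the quotient; this is exactly what makes the hypothesis on $\underline{\bar{I}}^{\bullet}$ — which carries no twist — available. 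Multiplication by $\langle\!\langle-1\rangle\!\rangle$ carries the above sequence to the analogous one for $j+1$, so passing to long exact cohomology sequences produces a commutative ladder with exact rows
\[
\cdots \to H^{i-1}(X,\underline{\bar{I}}^{j}) \to H^{i}(X,\underline{I}^{j+1}(\mathcal{L})) \to H^{i}(X,\underline{I}^{j}(\mathcal{L})) \to H^{i}(X,\underline{\bar{I}}^{j}) \to H^{i+1}(X,\underline{I}^{j+1}(\mathcal{L})) \to \cdots
\]
whose vertical arrows are all $\langle\!\langle-1\rangle\!\rangle$.

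With this in hand I would fix a line bundle $\mathcal{L}$, set $d=\dim(X)$, and prove by descending induction on $j$ that $\langle\!\langle-1\rangle\!\rangle\colon H^{i}(X,\underline{I}^{j}(\mathcal{L}))\to H^{i}(X,\underline{I}^{j+1}(\mathcal{L}))$ is an isomorphism for every $i$ with $j\geq i+n$. The base case is $j\geq d+1$: there the transition map $\langle\!\langle-1\rangle\!\rangle\colon\underline{I}^{j}(\mathcal{L})\to\underline{I}^{j+1}(\mathcal{L})$ is already an isomorphism of sheaves — this is the stabilization $\colim_{j}\underline{I}^{j}(\mathcal{L})\cong\underline{I}^{d+1}(\mathcal{L})$ recalled in the introduction (see \cite[Corollary~8.11]{Jacobson}, and \cite[Theorem~3.12]{RealCycleClassMap} for the twist) — hence it induces isomorphisms on $H^{i}(X,-)$ for all $i$, covering every pair $(i,j)$ with $j\geq d+1$. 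For the inductive step I would fix $j$ with $n\leq j\leq d$, assume the statement for all larger values of $j$, choose $i$ with $i\leq j-n$, and apply the five lemma to the segment of the ladder given by the above long exact sequence for $j$ and for $j+1$. The two vertical maps on the $\underline{\bar{I}}^{\bullet}$-terms, namely $H^{i-1}(X,\underline{\bar{I}}^{j})\to H^{i-1}(X,\underline{\bar{I}}^{j+1})$ and $H^{i}(X,\underline{\bar{I}}^{j})\to H^{i}(X,\underline{\bar{I}}^{j+1})$, are isomorphisms by the hypothesis, since $j\geq i+n\geq(i-1)+n$; the two vertical maps on the $\underline{I}^{\bullet}(\mathcal{L})$-terms other than the one under consideration, namely $H^{i}(X,\underline{I}^{j+1}(\mathcal{L}))\to H^{i}(X,\underline{I}^{j+2}(\mathcal{L}))$ and $H^{i+1}(X,\underline{I}^{j+1}(\mathcal{L}))\to H^{i+1}(X,\underline{I}^{j+2}(\mathcal{L}))$, are isomorphisms by the induction hypothesis applied at $j+1$, because $i\leq j-n\leq(j+1)-n$ and $i+1\leq(j+1)-n$. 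The five lemma then yields that $\langle\!\langle-1\rangle\!\rangle\colon H^{i}(X,\underline{I}^{j}(\mathcal{L}))\to H^{i}(X,\underline{I}^{j+1}(\mathcal{L}))$ is an isomorphism. The induction terminates once $j<n$, where there is no admissible $i$, and since every pair $(i,j)$ with $j\geq i+n$ is handled either in the base case or in the inductive step, and $\mathcal{L}$ was arbitrary, the proposition follows.

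Once the short exact sequence and the ladder are set up the argument is purely formal, so I do not anticipate a real obstacle; the part that demands care is the index bookkeeping in the five lemma, where one must verify that the range $j\geq i+n$ coming from the hypothesis on $\underline{\bar{I}}^{\bullet}$ matches the range produced by the induction hypothesis one step higher in $j$ — the slack $i\leq j-n\leq(j+1)-n$ for the degree-$i$ terms and $i+1\leq(j+1)-n$ for the degree-$(i+1)$ term is precisely what the five lemma consumes. The single external ingredient is Jacobson's sheaf-level stabilization at $j\geq\dim(X)+1$, which anchors the induction; no vanishing of cohomology in large degrees is needed, so smoothness of $X$ is used only insofar as it is built into the conventions behind $H^{i}(X,-)$ and the hypothesis.
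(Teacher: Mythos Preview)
Your proposal is correct and follows essentially the same route as the paper: both use the long exact sequence of the short exact sequence $0\to\underline{I}^{j+1}(\mathcal{L})\to\underline{I}^{j}(\mathcal{L})\to\underline{\bar{I}}^{j}\to 0$, anchor a downward induction on $j$ at Jacobson's stabilization for $j\geq\dim(X)+1$, and close with the five lemma. Your version is in fact more explicit than the paper's (which is deliberately terse, pointing to \cite[Proposition~5.5]{RealCycleClassMap}) in checking the index constraints and in spelling out why the twist by $\mathcal{L}$ disappears on the associated graded $\underline{\bar{I}}^{j}$.
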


The $n=0$ case is treated in \cite[Proposition 5.5]{RealCycleClassMap} and this proof is verbatim the same. We recall the proof for convenience. 

\begin{proof}
    Consider the long exact sequences associated to $0\to \underline{I}^{j}\to \underline{I}^{j-1}\to \underline{\bar{I}}^{j-1}\to 0$. 
	$$
	\begin{tikzcd}[column sep = 0.58em]
	H^{i-1}(X,\underline{\bar{I}}^{j-1}) \arrow[r]\arrow[d, "\langle\!\langle-1\rangle\!\rangle"] & H^{i}(X,\underline{I}^j(\mathcal{L})) \arrow[r]\arrow[d, "\langle\!\langle-1\rangle\!\rangle"] & H^{i}(X,\underline{I}^{j-1}(\mathcal{L})) \arrow[r]\arrow[d, "\langle\!\langle-1\rangle\!\rangle"] & H^{i}(X,\underline{\bar{I}}^{j-1}) \arrow[r]\arrow[d, "\langle\!\langle-1\rangle\!\rangle"] & H^{i+1}(X,\underline{I}^j(\mathcal{L}))\arrow[d, "\langle\!\langle-1\rangle\!\rangle"]\\
	H^{i-1}(X,\underline{\bar{I}}^{j}) \arrow[r]& H^{i}(X,\underline{I}^{j+1}(\mathcal{L})) \arrow[r]& H^{i}(X,\underline{I}^{j}(\mathcal{L})) \arrow[r] & H^{i}(X,\underline{\bar{I}}^{j}) \arrow[r]& H^{i+1}(X,\underline{I}^{j+1}(\mathcal{L}))
	\end{tikzcd}
	$$
    The second and fifth vertical morphism is an isomorphism for $j\geq \dim(X)+1$ by \cite[Corollary 8.11]{Jacobson}. Now the bijectivity of the third vertical morphism follows from the fourth by downward induction and the five-lemma.
\end{proof}

\begin{remark}\label{rem: non-surj of Ibar}
    The previous proof also shows that if $\langle\!\langle-1\rangle\!\rangle \colon H^i(X,\underline{\bar{I}}^j)\to H^i(X,\underline{\bar{I}}^{j+1})$ is non-surjective for $j=i+n-1$, then $\langle\!\langle-1\rangle\!\rangle \colon H^i(X,\underline{I}^j)\to H^i(X,\underline{I}^{j+1})$ is also non-surjective, by the four-lemma.
\end{remark}

\begin{theorem}\label{thm: real cycle class map is isomorphism for mult -1 Pister form isomorphism}
	Let $X$ be a smooth $n$-J-linear scheme over $\RR$ and $\mathcal{L}$ a line bundle, then the real cycle class map
	\[cl^i_j\colon H^i(X,\underline{I}^j(\mathcal{L})) \to H_{\text{sing}}^i(X(\RR),\ZZ(\mathcal{L}))\] 
    \begin{compactenum}
        \item[] $j\geq i+n$: is an isomorphism
        \item[] $j \leq i+n-1$: is a morphism to a subgroup containing $2^{i+n-j}H_{\text{sing}}^i(X(\RR),\ZZ(\mathcal{L}))$.
    \end{compactenum}
    Moreover, it is injective for $j = i+n-1$ and for $j < i$ we have $\im(cl^i_j) = 2^{i-j}\im(cl^i_i)$.
\end{theorem}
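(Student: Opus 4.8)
Following \cite[Chapter~5]{RealCycleClassMap} with the bound $i$ there replaced by $i+n$, I would reduce everything to three facts. First, the real cycle class maps are compatible with the transition maps, $cl^i_{j+1}\circ\langle\!\langle-1\rangle\!\rangle=cl^i_j$, and $cl^i_{\dim(X)+1}$ is an isomorphism by \cite{Jacobson}. Second, the transition map $\langle\!\langle-1\rangle\!\rangle\colon H^i(X,\underline I^j(\mathcal L))\to H^i(X,\underline I^{j+1}(\mathcal L))$ is an isomorphism for every $j\geq i+n$ --- this is \Cref{prop: mult -1 Pister form isomorphism on I for mult -1 Pister form isomorphism on I bar} applied to the bound of \Cref{cor: smooth J-linear schemes have mult -1 Pfister form isomorphism on I bar}. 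Third, in the Witt ring $\W(X)$ one has $\langle\!\langle-1\rangle\!\rangle=\langle-1,-1\rangle=2\langle-1\rangle=-2$, so the composite of $\langle\!\langle-1\rangle\!\rangle\colon\underline I^j(\mathcal L)\to\underline I^{j+1}(\mathcal L)$ with the sheaf inclusion $\underline I^{j+1}(\mathcal L)\hookrightarrow\underline I^j(\mathcal L)$, taken in either order, is multiplication by $-2$ on cohomology.

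I would treat the isomorphism range first. For $j\geq\dim(X)+1$ this is \cite{Jacobson}; for $i+n\leq j\leq\dim(X)$ the first fact gives $cl^i_j=cl^i_{\dim(X)+1}\circ\langle\!\langle-1\rangle\!\rangle^{\,\dim(X)+1-j}$ with every transition map occurring at a level $\geq i+n$, hence an isomorphism by the second fact; in particular $cl^i_{i+n}$ is an isomorphism. For the injectivity at $j=i+n-1$, since $cl^i_{i+n-1}=cl^i_{i+n}\circ\langle\!\langle-1\rangle\!\rangle$ it is enough to show $\langle\!\langle-1\rangle\!\rangle\colon H^i(X,\underline I^{i+n-1}(\mathcal L))\to H^i(X,\underline I^{i+n}(\mathcal L))$ injective, which I would get from the four-lemma applied to the map of long exact sequences that $\langle\!\langle-1\rangle\!\rangle$ induces from $0\to\underline I^{i+n}(\mathcal L)\to\underline I^{i+n-1}(\mathcal L)\to\underline{\bar I}^{i+n-1}\to 0$ to the same sequence one power higher, exactly as in the proof of \Cref{prop: mult -1 Pister form isomorphism on I for mult -1 Pister form isomorphism on I bar}: the two flanking vertical maps --- $\langle\!\langle-1\rangle\!\rangle$ on $H^{i-1}(X,\underline{\bar I}^{i+n-1})$ and on $H^i(X,\underline I^{i+n}(\mathcal L))$ --- are isomorphisms and $\langle\!\langle-1\rangle\!\rangle$ on $H^i(X,\underline{\bar I}^{i+n-1})$ is injective, all by \Cref{cor: smooth J-linear schemes have mult -1 Pfister form isomorphism on I bar}.

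Next the image statements. Iterating the first fact, $cl^i_j=cl^i_{i+n}\circ\langle\!\langle-1\rangle\!\rangle^{\,i+n-j}$ for $j\leq i+n$; let $\iota\colon H^i(X,\underline I^{i+n}(\mathcal L))\to H^i(X,\underline I^{j}(\mathcal L))$ be the map induced by the sheaf inclusion. By the third fact $\langle\!\langle-1\rangle\!\rangle^{\,i+n-j}\circ\iota=(-2)^{i+n-j}$, so $cl^i_j\circ\iota=(-2)^{i+n-j}cl^i_{i+n}$, and since $cl^i_{i+n}$ is surjective this forces $\im(cl^i_j)\supseteq 2^{i+n-j}H^i_{\text{sing}}(X(\RR),\ZZ(\mathcal L))$. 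For the sharper equality when $j<i$ I would also use the vanishing $H^i(X,\underline{\bar I}^{j})=0$; in Rost-Schmid notation this is $H^{RS}_{\dim(X)-i}(X,\bar I^{\,j-\dim(X)})=0$, which holds since $X$ is smooth and the Rost-Schmid complex of $\bar I^{m}$ is supported in degrees $\geq -m$ while $\dim(X)-i<-(j-\dim(X))$, the vanishing already invoked in section~4. From the long exact sequence of $0\to\underline I^{j+1}(\mathcal L)\to\underline I^j(\mathcal L)\to\underline{\bar I}^{j}\to 0$ it follows that the inclusion-induced map $g_j\colon H^i(X,\underline I^{j+1}(\mathcal L))\to H^i(X,\underline I^j(\mathcal L))$ is surjective, and then $cl^i_j\circ g_j=cl^i_{j+1}\circ\langle\!\langle-1\rangle\!\rangle\circ g_j=-2\,cl^i_{j+1}$ together with that surjectivity gives $\im(cl^i_j)=\im(cl^i_j\circ g_j)=2\,\im(cl^i_{j+1})$; since every $k$ with $j\leq k\leq i-1$ satisfies $k<i$, iterating from $i$ down to $j$ yields $\im(cl^i_j)=2^{\,i-j}\im(cl^i_i)$.

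The individual steps are short and the argument runs parallel to the cellular case of \cite{RealCycleClassMap}; the part I expect to require the most care is that the last two clauses are not formal consequences of the isomorphism range. They need the wrong-way inclusion maps together with the identity $\langle\!\langle-1\rangle\!\rangle=-2$ in $\W(X)$, and the equality for $j<i$ rests on the Rost-Schmid degree vanishing $H^{RS}_\ell(X,\bar I^m)=0$ for $\ell<-m$ rather than on the isomorphism lemmas of section~4, so matching that homological indexing with the cohomological statement --- and checking that all the cited inputs survive the twist by $\mathcal L$ --- is where the real bookkeeping lies.
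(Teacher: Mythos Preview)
Your argument is correct and follows the paper's approach: factor $cl^i_j$ through the chain of $\langle\!\langle-1\rangle\!\rangle$-maps, invoke \Cref{cor: smooth J-linear schemes have mult -1 Pfister form isomorphism on I bar} and \Cref{prop: mult -1 Pister form isomorphism on I for mult -1 Pister form isomorphism on I bar} for the isomorphism range, and use the commutative square relating the inclusion $\underline I^{j+1}\hookrightarrow\underline I^j$ to multiplication by $2$ on singular cohomology (the paper cites \cite[Lemma~4.1]{Lerbet} for this) for the image statements. You are in fact more thorough than the paper in two places it leaves terse: the injectivity at $j=i+n-1$, which you extract via the four-lemma from the ladder in the proof of \Cref{prop: mult -1 Pister form isomorphism on I for mult -1 Pister form isomorphism on I bar} (the paper simply draws a $\hookrightarrow$ in its factorization), and the \emph{equality} $\im(cl^i_j)=2^{i-j}\im(cl^i_i)$ for $j<i$, where you supply the missing ingredient---surjectivity of $H^i(X,\underline I^{j+1}(\mathcal L))\to H^i(X,\underline I^j(\mathcal L))$, coming from the Rost--Schmid degree vanishing $H^i(X,\underline{\bar I}^{\,j})=0$---while the paper's displayed square on its own only yields the containment $\im(cl^i_j)\supseteq 2\,\im(cl^i_{j+1})$.
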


\begin{proof}
	The proof is again the analog of \cite[Theorem 5.7]{RealCycleClassMap}. The theorem follows from \Cref{cor: smooth J-linear schemes have mult -1 Pfister form isomorphism on I bar}, and \Cref{prop: mult -1 Pister form isomorphism on I for mult -1 Pister form isomorphism on I bar}, together with the factorization of the real cycle class maps 
    \[H^i(X,\underline{I}^{i+n-1}(\mathcal{L})) \hookrightarrow H^i(X,\underline{I}^{i+n}(\mathcal{L})) \overset{\cong}{\rightarrow}\dots \overset{\cong}{\rightarrow}H^i(X,\underline{I}^{\dim(X)+1}(\mathcal{L})) \overset{\cong}{\rightarrow} H^i_\text{sing}(X(\RR),\ZZ(\mathcal{L})).\]
    The description of the images follows from the commutative diagram
    $$
    \begin{tikzcd}
        H^i(X,\underline{I}^{j+1}(\mathcal{L})) \arrow[r]\arrow[d, swap, "cl^i_{j+1}"] & H^i(X,\underline{I}^j(\mathcal{L}))\arrow[d, "cl^i_j"]\\
        H^i_\text{sing}(X(\RR),\ZZ(\mathcal{L})) \arrow[r, swap, "\cdot 2"] & H^i_\text{sing}(X(\RR),\ZZ(\mathcal{L}))
    \end{tikzcd}
    $$
    which shows $\im(cl^i_j)\supseteq 2\im(cl^i_{j+1})$, see also \cite[Lemma 4.1]{Lerbet}.
\end{proof}

\begin{remark}
	The formulation of \Cref{thm: real cycle class map is isomorphism for mult -1 Pister form isomorphism} should not be seen as optimal. All the statements in this chapter give a more detailed description of how the sequence of maps \[H^i(X,\underline{I}^{i}) \longrightarrow H^i(X,\underline{I}^{i+1}) \longrightarrow\dots \longrightarrow H^i(X,\underline{I}^{i+n}) \overset{\cong}{\longrightarrow} H^i_\text{sing}(X(\RR),\ZZ)\] behaves and provide possible improvements in concrete situations. 
    In particular, \Cref{lem: T-linear analysis} and \Cref{rem: non-surj of Ibar} can show non-bijectivity of the real cycle class map.
\end{remark}

For example, an application of \Cref{rem: at worst A^n x G_m^d cells} together with the previous line of arguments leads to the following corollary.

\begin{corollary}\label{lem: real cycle class map for stratifications by An Gm products}
    Let $X$ be a smooth $\RR$-scheme stratified by cells of the form $\AA^n_k\times \mathbb{G}_m^{d'}$ with all $d'\leq d$ and varying $n$, then the real cycle class map
	\[H^i(X,\underline{I}^j(\mathcal{L})) \to H_{\text{sing}}^i(X(\RR),\ZZ(\mathcal{L}))\]
	is an isomorphism for $j\geq i+d$ and injective for $j=i+d-1$.
\end{corollary}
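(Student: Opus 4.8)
The plan is to deduce this directly from \Cref{thm: real cycle class map is isomorphism for mult -1 Pister form isomorphism} by first establishing the $\underline{\bar{I}}$-version of the multiplication isomorphism. First I would note, using \Cref{cor: A^n x G_m^d cells are d-J-linear}, that every cell $\AA^n_\RR\times\mathbb{G}_m^{d'}$ is $d'$-J-linear, hence $d$-J-linear since $d'\leq d$ and an $n$-J-linear scheme is $m$-J-linear for all $m\geq n$. Thus $X$ is a smooth $\RR$-scheme stratified by $d$-J-linear schemes, and \Cref{cor: stratified by J-linear schemes have mult -1 Pfister form isomorphism on I bar} (equivalently \Cref{rem: at worst A^n x G_m^d cells}) applies: the multiplication $\langle\!\langle-1\rangle\!\rangle\colon H^i(X,\underline{\bar{I}}^j)\to H^i(X,\underline{\bar{I}}^{j+1})$ is an isomorphism for $j\geq i+d$ and injective for $j=i+d-1$.

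Next I would lift this statement from $\underline{\bar{I}}$ to $\underline{I}(\mathcal{L})$. In the isomorphism range $j\geq i+d$ this is exactly the content of \Cref{prop: mult -1 Pister form isomorphism on I for mult -1 Pister form isomorphism on I bar}, applied with $n=d$: it gives that $\langle\!\langle-1\rangle\!\rangle\colon H^i(X,\underline{I}^j(\mathcal{L}))\to H^i(X,\underline{I}^{j+1}(\mathcal{L}))$ is an isomorphism for all $j\geq i+d$ and all line bundles $\mathcal{L}$. For the boundary case $j=i+d-1$ I would re-run the ladder argument from the proof of \Cref{prop: mult -1 Pister form isomorphism on I for mult -1 Pister form isomorphism on I bar} (see also \Cref{rem: non-surj of Ibar}): comparing the long exact sequences of $0\to\underline{I}^{j+1}\to\underline{I}^{j}\to\underline{\bar{I}}^{j}\to 0$ and $0\to\underline{I}^{j+2}\to\underline{I}^{j+1}\to\underline{\bar{I}}^{j+1}\to 0$, the vertical maps on $\underline{I}^{j+1}(\mathcal{L})$ and $\underline{I}^{j+2}(\mathcal{L})$ are isomorphisms (since $j+1\geq i+d$), the map on $\underline{\bar{I}}^{j}$ is injective and the map on $\underline{\bar{I}}^{j+1}$ is an isomorphism by the previous paragraph, so the four-lemma forces injectivity of $\langle\!\langle-1\rangle\!\rangle\colon H^i(X,\underline{I}^{i+d-1}(\mathcal{L}))\to H^i(X,\underline{I}^{i+d}(\mathcal{L}))$.

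Finally I would assemble the usual factorization of the real cycle class map, just as in \Cref{thm: real cycle class map is isomorphism for mult -1 Pister form isomorphism}: compose
\[H^i(X,\underline{I}^{i+d-1}(\mathcal{L}))\hookrightarrow H^i(X,\underline{I}^{i+d}(\mathcal{L}))\xrightarrow{\ \cong\ }\cdots\xrightarrow{\ \cong\ }H^i(X,\underline{I}^{\dim(X)+1}(\mathcal{L}))\xrightarrow{\ \cong\ }H^i_{\text{sing}}(X(\RR),\ZZ(\mathcal{L})),\]
where every intermediate arrow is one of the multiplication isomorphisms just established (all indices involved are $\geq i+d$) and the last arrow is Jacobson's isomorphism \cite[Corollary 8.11]{Jacobson}, resp.\ \cite[Theorem 3.12]{RealCycleClassMap} in the twisted case. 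Reading off this composite gives the isomorphism for $j\geq i+d$ and injectivity for $j=i+d-1$. I do not anticipate a real obstacle: the only points requiring care are checking that a $\mathbb{G}_m$-cell stratification literally satisfies the hypothesis of \Cref{cor: stratified by J-linear schemes have mult -1 Pfister form isomorphism on I bar} (immediate from $d'\leq d$) and the index bookkeeping in the boundary case $j=i+d-1$.
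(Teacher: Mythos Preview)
Your proposal is correct and follows essentially the same route as the paper: combine \Cref{cor: A^n x G_m^d cells are d-J-linear}, \Cref{cor: stratified by J-linear schemes have mult -1 Pfister form isomorphism on I bar}, \Cref{prop: mult -1 Pister form isomorphism on I for mult -1 Pister form isomorphism on I bar}, and then the factorization argument of \Cref{thm: real cycle class map is isomorphism for mult -1 Pister form isomorphism}. You spell out the boundary injectivity step $j=i+d-1$ via the four-lemma more explicitly than the paper does, but this is exactly the ``argument in \Cref{thm: real cycle class map is isomorphism for mult -1 Pister form isomorphism}'' that the paper invokes.
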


\begin{proof}
    This is the combination of \Cref{cor: A^n x G_m^d cells are d-J-linear}, \Cref{cor: stratified by J-linear schemes have mult -1 Pfister form isomorphism on I bar}, \Cref{prop: mult -1 Pister form isomorphism on I for mult -1 Pister form isomorphism on I bar} and the argument in \Cref{thm: real cycle class map is isomorphism for mult -1 Pister form isomorphism}.
\end{proof}

\section{Examples}

The easiest example is $\AA^n_k$ because it is $0$-linear and cellular. There are no non-trivial line bundles on $\AA^n_k$, so there are no twist to consider, and by $\AA^1$-invariance the cohomology is concentrated in degree $0$. There we have $H^0(\AA^n_k,\underline{I}^j)=I^j(k)$ and the sequence becomes
\[\underbrace{H^0(\AA^n_\RR,\underline{I}^{-1})}_{=\W(\RR)=\ZZ} \overset{\cdot 2}{\rightarrow} \underbrace{H^0(\AA^n_\RR,\underline{I}^{0})}_{=\W(\RR)=\ZZ} \overset{\cong}{\rightarrow}\underbrace{H^0(\AA^n_\RR,\underline{I}^{1})}_{=I(\RR)=2\ZZ} \overset{\cong}{\rightarrow}\dots \overset{\cong}{\rightarrow}\underbrace{H^0(\AA^n_k,\underline{I}^{n+1})}_{=I^{n+1}(\RR)=2^{n+1}\ZZ} \overset{\cong}{\rightarrow} \underbrace{H^0_\text{sing}(\RR^n,\ZZ)}_{\cong \ZZ}.\]

The next example is $\mathbb{G}_m$, which is $1$-linear but not $0$-linear. Again, there are no non-trivial line bundles and the cohomology is concentrated in degree $0$. We have \[H^0(\mathbb{G}_m,\underline{I}^j)=I^j(k)\oplus \langle\!\langle t\rangle\!\rangle I^{j-1}(k)\subseteq I^j(k(t)).\]
The sequence becomes
\[\underbrace{H^0(\mathbb{G}_m,\underline{I}^{-1})}_{=\W(\RR)\oplus \langle\!\langle t\rangle\!\rangle\W(\RR)} \!\overset{\cdot 2}{\rightarrow}\!\! \underbrace{H^0(\mathbb{G}_m,\underline{I}^{0})}_{=\W(\RR)\oplus \langle\!\langle t\rangle\!\rangle\W(\RR)} \!\!\overset{(\operatorname{id},\cdot 2)}{\longrightarrow}\!\!\underbrace{H^0(\mathbb{G}_m,\underline{I}^{1})}_{=I(\RR)\oplus \langle\!\langle t\rangle\!\rangle\W(\RR)} \!\overset{\cong}{\rightarrow}\!\underbrace{H^0(\mathbb{G}_m,\underline{I}^{2})}_{=I^{2}(\RR)\oplus \langle\!\langle t\rangle\!\rangle I(\RR)}\cong \underbrace{H^0_\text{sing}(\RR\setminus \{0\},\ZZ)}_{\cong \ZZ^2}.\]
Here the multiplication by $\langle\!\langle -1\rangle\!\rangle\colon H^0(\mathbb{G}_m,\underline{I}^{j}) \to H^0(\mathbb{G}_m,\underline{I}^{j+1})$ is not an isomorphism for $j=0$, as it would be for cellular schemes, but for $j\geq 1$. This improved bound, compared to Jacobson's $j\geq 2 = \dim(\mathbb{G}_m)+1$, is also explained by \cite[Proposition 3.4]{JensRealCycleClassMap}.

The same argument is applicable to $\AA^n_k\times \mathbb{G}_m^d$ with 
\[H^0(\AA^n_k\times \mathbb{G}_m^d, \underline{I}^j) \cong \bigoplus_{i=0}^d \left(I^{j-i}(k)\right)^{\binom{d}{i}},\]
showing that $\langle\!\langle -1\rangle\!\rangle\colon H^0(\AA^n_k\times \mathbb{G}_m^d,\underline{I}^{j}) \to H^0(\AA^n_k\times \mathbb{G}_m^d,\underline{I}^{j+1})$ is an isomorphism for $j\geq d$ and injective but not surjective for $j < d$.

The cokernel of $H^0(\mathbb{G}_m^d,\underline{I}^{0}) \to H^0(\mathbb{G}_m^d,\underline{I}^{d})\cong H^0_\text{sing}((\RR\setminus \{0\})^d,\ZZ)$ has exponent $2^d$.

This can be generalized as follows:
\begin{proposition}
    For any integers $d > c \geq 0$ the cokernel of 
    \[H^c(\PP^c_\RR\times \mathbb{G}_m^{d-c},\underline{I}^c(\mathcal{\mathcal{O}\rm{(c+1)}})) \rightarrow H^c_\text{sing}(\RP^c\times(\RR\setminus\{0\})^{d-c},\ZZ(\mathcal{O}(c+1)))\] has exponent $2^{d-c}$.
\end{proposition}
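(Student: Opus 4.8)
The plan has three parts: identify the target group, bound the exponent of the cokernel from above using the results of this section, and then match it from below by making the image of $cl^c_c$ completely explicit.

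\textbf{Target and upper bound.} Since $\RR\setminus\{0\}\simeq S^0$, the space $\RP^c\times(\RR\setminus\{0\})^{d-c}$ is homotopy equivalent to $2^{d-c}$ disjoint copies of $\RP^c$, indexed by the components $\epsilon\in\{+,-\}^{d-c}$ of $(\RR\setminus\{0\})^{d-c}$, and on each the realization of $\mathcal{O}(c+1)$ is $\mathcal{O}(c+1)|_{\RP^c}$, with $w_1=(c+1)\alpha$ for $\alpha$ the generator of $H^1(\RP^c;\ZZ/2)$. Poincaré duality on the closed $c$-manifold $\RP^c$, using $w_1(T\RP^c)=(c+1)\alpha$, gives $H^c_\text{sing}(\RP^c,\ZZ(\mathcal{O}(c+1)))\cong H_0(\RP^c,\ZZ((c+1)\alpha+(c+1)\alpha))=H_0(\RP^c,\ZZ)=\ZZ$ — this is exactly why the twist $\mathcal{O}(c+1)$ is taken — so the target is $\ZZ^{2^{d-c}}$ with a distinguished basis $(e_\epsilon)_\epsilon$. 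For the upper bound: $X=\PP^c_\RR\times\mathbb{G}_m^{d-c}$ is smooth and, via the Schubert cells of $\PP^c_\RR$, stratified by cells $\AA^n_\RR\times\mathbb{G}_m^{d-c}$, so by \Cref{lem: real cycle class map for stratifications by An Gm products} (with defect parameter $d-c$) the map $cl^c_j$ is an isomorphism for $j\geq d$, and the commutative square in the proof of \Cref{thm: real cycle class map is isomorphism for mult -1 Pister form isomorphism} gives $\im(cl^c_c)\supseteq 2^{d-c}\im(cl^c_d)=2^{d-c}\ZZ^{2^{d-c}}$; hence the cokernel is a quotient of $(\ZZ/2^{d-c})^{2^{d-c}}$ and its exponent divides $2^{d-c}$.

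\textbf{Making $\im(cl^c_c)$ explicit.} Iterating the localization sequence for $\PP^c_\RR\times\mathbb{G}_m^{m}\hookrightarrow\PP^c_\RR\times\mathbb{G}_m^{m-1}\times\AA^1\hookleftarrow\PP^c_\RR\times\mathbb{G}_m^{m-1}$ — which splits via the unit sections $t_i=1$ and via multiplication by $\langle\!\langle t_i\rangle\!\rangle$ — produces a Künneth-type decomposition
\[H^c(X,\underline{I}^c(\mathcal{L}))\;\cong\;\bigoplus_{S\subseteq\{1,\dots,d-c\}}\langle\!\langle t_S\rangle\!\rangle\cdot H^c(\PP^c_\RR,\underline{I}^{c-|S|}(\mathcal{L})),\qquad \langle\!\langle t_S\rangle\!\rangle:=\textstyle\prod_{i\in S}\langle\!\langle t_i\rangle\!\rangle.\]
By naturality and multiplicativity of Jacobson's cycle class map, $cl^c_c$ respects this: on the $S$-summand it sends $\langle\!\langle t_S\rangle\!\rangle\cdot x$ to $cl^0_{|S|}(\langle\!\langle t_S\rangle\!\rangle)\times cl^c_{c-|S|}(x)$, where $cl^0_{|S|}(\langle\!\langle t_S\rangle\!\rangle)$ is, up to sign, the indicator $g_S\in H^0_\text{sing}((\RR\setminus\{0\})^{d-c},\ZZ)$ of the set of components $\epsilon$ with $\epsilon_i=-$ for all $i\in S$. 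Since $\PP^c_\RR$ is cellular, $\im(cl^c_{c-|S|})=2^{|S|}H^c_\text{sing}(\RP^c,\ZZ(\mathcal{O}(c+1)))=2^{|S|}\ZZ$ by \cite[Theorem 5.7]{RealCycleClassMap}; viewing $g_S=\sum_{\epsilon_i=-\,\forall i\in S}e_\epsilon\in\ZZ^{2^{d-c}}$, this gives $\im(cl^c_c)=\sum_{S\subseteq\{1,\dots,d-c\}}2^{|S|}\ZZ\cdot g_S$.

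\textbf{The combinatorics, and the hard point.} The vectors $(g_S)_S$ are the columns of the zeta matrix of the Boolean lattice on $\{1,\dots,d-c\}$, so Möbius inversion gives $e_T=\sum_{S\subseteq T}(-1)^{|T|-|S|}g_S$; multiplying by $2^{d-c}$ and using $2^{d-c}g_S=2^{d-c-|S|}(2^{|S|}g_S)$ re-confirms that $2^{d-c}$ kills the cokernel. For sharpness, if $2^{d-c-1}e_{\{1,\dots,d-c\}}=\sum_S a_S g_S$ with each $a_S\in 2^{|S|}\ZZ$, then evaluating both sides on all components and inverting forces $a_S=0$ for $S\neq\{1,\dots,d-c\}$ and $a_{\{1,\dots,d-c\}}=2^{d-c-1}$, which is impossible; hence $e_{\{1,\dots,d-c\}}$ has order exactly $2^{d-c}$ in the cokernel. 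I expect the main obstacle to be the Künneth step: one must split the iterated localization sequences, check that the splitting is compatible with the multiplicative structure of the real cycle class map so that the $\langle\!\langle t_S\rangle\!\rangle$- and $\PP^c_\RR$-factors genuinely decouple after applying $cl^c_c$, and—above all—establish that the image of $cl^c_{c-|S|}$ on $\PP^c_\RR$ is \emph{exactly} $2^{|S|}\ZZ$, not merely contained in it, since it is this exactness for $\PP^c_\RR$ that forces the bound of \cite[Conjecture 6.6]{Lerbet} to be attained.
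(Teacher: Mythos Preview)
Your proof is correct and reaches the same conclusion, but by a genuinely different route. The paper computes the source group in one stroke via the cellular chain complex of \cite{MorelSawant,PhDThesis}: the cells $\AA^i_\RR\times\mathbb{G}_m^{d-c}$ of $\PP^c_\RR\times\mathbb{G}_m^{d-c}$ produce a complex whose degree-$c$ term is $\bigoplus_{i=0}^{d-c}(I^{-i}(\RR))^{\binom{d-c}{i}}$, the incoming differential vanishes for the twist $\mathcal{O}(c+1)$ by \cite{PhDThesis}, and one then tracks $\langle\!\langle-1\rangle\!\rangle$ summand-wise exactly as in the preceding $\mathbb{G}_m^d$ example --- the cokernel is immediately read off as $\bigoplus_i(\ZZ/2^{i})^{\binom{d-c}{i}}$, with no need for the basis $(g_S)$, M\"obius inversion, or multiplicativity of $cl$. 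Your approach instead builds the same decomposition by hand through iterated $\mathbb{G}_m$-localization and then pushes it through the real cycle class map using its compatibility with external products; this is more self-contained (you avoid the cellular-complex machinery and the black-box citation to \cite{PhDThesis} for the vanishing differential) but requires you to justify separately the multiplicativity of $cl$, the identification $cl^0_{|S|}(\langle\!\langle t_S\rangle\!\rangle)=\pm g_S$, and the exact image $\im(cl^c_{c-|S|})=2^{|S|}\ZZ$ on $\PP^c_\RR$. One simplification: once you note that $(g_S)_S$ is a $\ZZ$-basis of $\ZZ^{2^{d-c}}$ (the zeta matrix is unipotent), the cokernel is visibly $\bigoplus_S\ZZ/2^{|S|}$, and since $g_{\{1,\dots,d-c\}}=e_{(-,\dots,-)}$ your final M\"obius-inversion and evaluation argument becomes superfluous.
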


\begin{proof}
    The $\RR$-scheme $\PP^c_\RR\times \mathbb{G}_m^{d-c}$ is cellular in the sense of \cite{MorelSawant}, induced by the standard cellular structure of $\PP^n_k$. The cells are all of the form $\AA^i_k\times\mathbb{G}_m^{d-c}$ for $i=0,\dots,n$. The cellular complex \cite{MorelSawant},\cite{PhDThesis} for $\underline{I}^c(\mathcal{O}(c+1))$ coefficients around degree $c$ is:
    $$
	\begin{tikzcd}
	\bigoplus_{i=0}^{d-c} \left(I^{1-i}(k)\right)^{\binom{d-c}{i}}\arrow[r, "\partial^{c-1}"] & \bigoplus_{i=0}^{d-c} \left(I^{-i}(k)\right)^{\binom{d-c}{i}}\arrow[r] & 0
	\end{tikzcd}
	$$
    By \cite{PhDThesis}, the differential $\partial^{c-1}$ is zero for the twist by $\mathcal{O}(c+1)$. Hence 
    \[H^c(\PP^c_\RR\times \mathbb{G}_m^{d-c},\underline{I}^c(\mathcal{\mathcal{O}\rm{(c+1)}})) = \bigoplus_{i=0}^{d-c} \left(I^{-i}(k)\right)^{\binom{d-c}{i}}\]
    and the same argument as before shows that the cokernel of the real cycle class map has exponent $2^{d-c}$.
\end{proof}

This solves the non-improvability of the conjectured upper bound on the exponent in \cite[Conjecture 6.6]{Lerbet}.


\begin{thebibliography}{999}
\normalsize

\bibitem{PhDThesis}
Jan Hennig, {\em A cellular Milnor-Witt (co)homology computation for the moduli space of stable, genus 0 curves with marked points}, (2025),  PhD Thesis

\bibitem{RealCycleClassMap}
 Jens Hornbostel, Matthias Wendt, Heng Xie, and Marcus Zibrowius, {\em The real cycle class map}, Ann.\ K-Theory 6 (2021), no.\ 2, 239--317

\bibitem{JensRealCycleClassMap}
 Jens Hornbostel, {\em A few computations about the real cycle class map in low dimensions}, Q.\ J.\ Math.\ 76 (2025), no.\ 1, 185--194

\bibitem{Jacobson}
 Jeremy A.\ Jacobson, {\em Real cohomology and the powers of the fundamental ideal in the {W}itt ring}, Ann.\ K-Theory 2 (2017), no.\ 3, 357--385

\bibitem{JannsenLinear}
 Uwe Jannsen, {\em Mixed motives and algebraic {$K$}-theory}, Lecture Notes in Mathematics 1400, Springer-Verlag, Berlin (1990)

\bibitem{Lerbet}
Samuel Lerbet, {\em A real analogue of the Hodge conjecture} (2025), \url{https://arxiv.org/abs/2410.16899}

\bibitem{MorelSawant}
Fabien Morel, and Anand Sawant, {\em Cellular {$\mathbb{A}^1$}-homology and the motivic version of Matsumoto's theorem}, Adv.\ Math.\ 434 (2023)

\bibitem{TotaroLinear}
 Burt Totaro, {\em Chow groups, {C}how cohomology, and linear varieties}, Forum Math.\ Sigma 2 (2014), no.\ e17, 25

\end{thebibliography}
\end{document}